\newcommand\on[1]{\operatorname{#1}}
    \newcommand\ba{\begin{align*}}
    \newcommand\ea{\end{align*}}
    \newcommand\be{\begin{enumerate}}
    \newcommand\ee{\end{enumerate}}
    \newcommand\bp{\begin{proof}}
    \newcommand\ep{\end{proof}}
    \newcommand\bpp{\begin{prop}}
    \newcommand\epp{\end{prop}}
    \newcommand\bpb{\begin{prob}}
    \newcommand\epb{\end{prob}}
    \newcommand\bd{\begin{defn}}
    \newcommand\ed{\end{defn}}
    \newcommand\bh{\begin{hint}}
    \newcommand\eh{\end{hint}}
    \newcommand\bC{\mathbb{C}}
    \newcommand\bN{\mathbb{N}}
    \newcommand\N{\mathbb{N}}
    \newcommand\bR{\mathbb{R}}
    \newcommand\bZ{\mathbb{Z}}
    \newcommand\Z{\mathbb{Z}}
    \newcommand{\R}[0]{\mathbb{R}}
    \newcommand\supp{\operatorname{supp}}
    \newcommand\gam{\Gamma}
    \newcommand\Mod{\operatorname{Mod}}
    \newcommand\rk{\operatorname{rk}}
    \DeclareMathOperator\Homeo{Homeo}
    \newcommand\mC{\mathcal{C}}
    \DeclareMathOperator\HS{HS}
    \DeclareMathOperator\ro{RO}
    \DeclareMathOperator\cl{cl}
    \newcommand{\mH}{\mathcal H}
    \newcommand{\mN}{\mathcal N}
     \newcommand{\bSigma}{\mathbf{\Sigma}}
     \newcommand{\bPi}{\mathbf{\Pi}}
    \def\thetitle{{Uniform first order interpretation of the second order theory of countable groups of homeomorphisms}}
    \def\theauthors{{Thomas Koberda, J.~de la Nuez Gonz\'alez}}
    \theoremstyle{theorem}
    \newtheorem{thm}{Theorem}[section]
    \newtheorem{lemma}[thm]{Lemma}
    \newtheorem{cor}[thm]{Corollary}
    \newtheorem{prop}[thm]{Proposition}
    \newtheorem*{claim*}{Claim}
    \theoremstyle{remark}
    \theoremstyle{definition}
    \newtheorem{defn}[thm]{Definition}
    \newtheorem{prob}{Problem}[section]
\begin{document}
    \title\thetitle
    \date{\today}
    \keywords{homeomorphism group, manifold, first order theory, second order theory}
    \subjclass[2020]{Primary: 20A15, 57S05, ; Secondary: 03C07, 57S25, 57M60}
    

    \author[T. Koberda]{Thomas Koberda}
    \address{Department of Mathematics, University of Virginia, Charlottesville, VA 22904-4137, USA}
    \email{thomas.koberda@gmail.com}
    \urladdr{https://sites.google.com/view/koberdat}
    
    \author[J. de la Nuez Gonz\'alez]{J. de la Nuez Gonz\'alez}
    \address{School of Mathematics, Korea Institute for Advanced Study (KIAS), Seoul, 02455, Korea}
    \email{jnuezgonzalez@gmail.com}

    \begin{abstract}
    We show that the first order theory of the homeomorphism group of a compact manifold interprets the full second order theory
    of countable groups of homeomorphisms of the manifold. The interpretation is uniform across manifolds of bounded
    dimension. As a consequence, many classical problems in group theory and geometry (e.g.~the linearity of mapping classes of compact
    $2$--manifolds) are encoded as elementary properties
    of homeomorphism groups of manifolds.
    Furthermore, the homeomorphism group uniformly interprets the Borel
    and projective hierarchies of the homeomorphism group, which gives a characterization
    of definable subsets of the homeomorphism group.
    Finally, we prove analogues of Rice's Theorem from computability theory for
    homeomorphism groups of manifolds. As a consequence, it follows
    that the collection of sentences that isolate the homeomorphism
    group of a particular manifold, or that isolate the homeomorphism groups of manifolds in general, is not definable in second order arithmetic, and that
    membership of particular sentences in these collections cannot be proved in ZFC.
    \end{abstract}
    
    \maketitle

\setcounter{tocdepth}{1}
    

\section{Introduction}

Let $M$ be a compact, connected, topological manifold of positive dimension.
In this paper, we investigate countable subgroups of the group
$\Homeo(M)$ from the point of view of the first order logic of groups, thus continuing a research program initiated together with
Kim~\cite{dlNKK22}. There, we proved that for each compact manifold $M$, there is a sentence in the language of groups which isolates
the group $\Homeo(M)$; that is, there exists a sentence
in the language of group theory that is true in the group of homeomorphisms of an arbitrary compact manifold $N$ if and only if $N$
is homeomorphic to $M$.

The overarching theme of this paper is that the first order theory of
$\Homeo(M)$ is expressive enough to interpret arbitrary sequences of elements of $\Homeo(M)$.
More concretely:
on the one hand, the question of determining the isomorphism type of the subgroup of
$\Homeo(M)$ generated by a finite list of elements is difficult, and in general is intractable.
On the other hand, it can be shown by general Baire category arguments (Proposition 4.5 in~\cite{Ghys2001}, cf.~Chapter 3 in~\cite{KKM2019})
that generically, pairs of homeomorphisms will generate
nonabelian free groups. Even in the case of one-dimensional manifolds, general finitely generated groups of homeomorphisms
(and even diffeomorphisms)
can be extremely complicated;
cf.~\cite{BKK2019JEMS,KK2018JT,KKL2019ASENS,KL2017,KK2020crit,KK2021book,BMRT,KKR2020}.

Since $\Homeo(M)$ can interpret arbitrary sequences of elements in the underlying group, the first order theory of
$\Homeo(M)$ is expressive enough to decide if a countable subgroup if isomorphic to a given
finitely presented group;
as another example, by identifying tuples of homeomorphisms which generate a particular isomorphism type of groups (e.g.~a free
group of rank two), we obtain
an upper bound on the complexity of the set of tuples which generate that type of group.
Thus, the elementary theory of the homeomorphism group $\Homeo(M)$ encodes a substantial
amount of the algebraic structure of this group.

\subsection{Main results}
All results stated in this section hold for arbitrary compact, connected manifolds; we assume connectedness mostly for convenience. There is a dependence of the
formulae on the dimension of the
underlying manifold, but otherwise all formulae are uniform across manifolds of 
fixed dimension.
Throughout, we let \[\Homeo_0(M)\leq \mH\leq\Homeo(M),\]
where here $\Homeo_0(M)$ denotes the identity component of $\Homeo(M)$. Unless otherwise noted, formulae are uniform in $\mH$, which
is to say they do not depend on which subgroup between $\Homeo_0(M)$ and $\Homeo(M)$ we consider.
We suppress $M$ from the notation $\mH$ since it will not cause confusion.

To begin, $\mH$ is viewed as a structure in the language of group theory. The content
of the paper~\cite{dlNKK22} is that the language of group theory in $\mH$ admits a
conservative expansion wherein many more things can be interpreted: specifically,
the sorts of regular open sets $\ro(M)$ in $M$, the natural numbers $\N$, the real numbers
$\R$,
and points in $M$ can be parameter-free interpreted. Moreover, natural predicates, both
internal to these sorts (e.g.~arithmetic) and
relating these sorts to each other, are uniformly definable;
see Theorem~\ref{thm:kkdlng} below.

The main result of this paper is the conservative interpretation of a sequence of new sorts
in $\mH$, which are written $\HS_i(M)$ for $i\geq 0$. The meanings of these sorts are as
follows:
\begin{itemize}
    \item The elements of $\HS_0(M)$ are in canonical correspondence with homeomorphisms of
    $M$.
    \item For $i\geq 1$, the elements of $\HS_i(M)$ are in canonical correspondence with
    sequences of elements in $\HS_{i-1}(M)$.
    \item These sorts admit parameter-free definable predicates for manipulating them and
    for relating them to each other and to the home sort.
\end{itemize}

We call $\HS(M)$, the union of the sorts $\{\HS_i(M)\}_{i\in\N}$,
\emph{hereditarily sequential
subsets of $\Homeo(M)$}; this is by analogy to (and by generalization of)
\emph{hereditarily finite sets} (cf.~Section 3 in~\cite{KMS-2021},
for instance).

Note that for $n\geq 2$, elements of $\HS_n(M)$ are not really subsets of $\Homeo(M)$.
One would be justified in calling an interpretation of $\HS_1(M)$ \emph{countable second
order logic}, since then one can quantify freely over countable subsets of $\Homeo(M)$.
Then, for $n\geq 2$ one would be justified in calling an interpretation of $\HS_n(M)$
\emph{countable $(n+1)^{st}$ order logic}. The distinction between countable second order
logic and countable higher order logics collapses in our situation; this is because
our interpretation of countable second order logic (i.e.~$\HS_1(M)$)
encodes countable sequences via fixed
length definable tuples, up to a definable equivalence relation. Thus for all
$n\geq 2$, an interpretation 
of $\HS_n(M)$ would consist of sequences of fixed length finite tuples, which themselves
would be encoded by fixed length finite tuples in $\mH$.

Hereditarily sequential sets subsume hereditarily finite sets via
a straightforward padding construction.

\begin{thm}\label{thm:hered-seq-homeo}
Let $D\geq 1$ be a natural number, and let \[\Homeo_0(M)\leq \mH\leq \Homeo(M).\]
Then there is a conservative expansion of the language of group theory and a uniform
interpretation of the union of the sorts
$\HS(M)$ in $\mH$ that is valid for all manifolds $M$ with $\dim M\leq D$. The elements in
the sort
$\HS_0(M)$ canonically correspond to elements of $\Homeo(M)$.

Moreover, the following predicates are definable without parameters:
\begin{enumerate}
\item
For each $i$ and each $j\in\bN$, the $j^{th}$ element $s(j)$ of a sequence
$s\in\HS_i(M)$;
\item
For each $i\geq 0$, a membership predicate \[\in_i\subseteq \HS_i(M)\times\HS_{i+1}(M)\]
defined recursively by:
\begin{enumerate}
    \item $\gam\in_0 s$ if and only if there is a $j$ such that $\gam=s(j)$.
    \item $s\in_i t$ if and only if there is a $j$ such that $s=t(j)$.
\end{enumerate}
\item
Member-wise group multiplication within $\HS_1(M)$, i.e.~a predicate 
$\on{mult}_{i,j,k}(\sigma)$ such that for all sequences $s\in\HS_1(M)$, we have
$\on{mult}_{i,j,k}(s)$ if and only if $s(i)\cdot s(j)=s(k)$.
\item\label{i:membership}
Membership of an element in $\HS_0(M)$ in $\mH$, i.e.~a predicate \[R\subseteq \mH\times
\HS_0(M)\] such that $(g,\gam)\in R$ if and only if $\gam$ canonically encodes $g$.
\item 
The extended support $\supp^e f$ of an element $f\in\HS_0(M)$, i.e.~a predicate
\[\supp^e\subseteq \HS_0(M)\times\ro(M)\] such that $(\gam,U)\in\supp^e$ if and only if
the homeomorphism encoded by $\gam$ has extended support equal to $U$.
\end{enumerate}
\end{thm}

We will sometimes abuse notation and suppress the subscript in
$\in_i$ when no confusion can
occur.
We note that Item~\ref{i:membership} is crucial and what makes Theorem~\ref{thm:hered-seq-homeo} not a consequence of~\cite{dlNKK22}. Moreover, Item~\ref{i:membership} will allow
us to characterize definable sets in $\mH$ below (see Theorem~\ref{thm:descriptive}).

The key step in interpreting $\HS(M)$ yields the following, which is of independent
interest. See Lemma~\ref{lem:point-seq}.

\begin{prop}\label{prop:point-seq}
    For manifolds of fixed dimension, the group $\mH$ admits a uniform, parameter-free
    interpretation of the sort $\on{seq}(M)$ of countable
    sequences of points in $M$, which is uniform for all
    manifolds of dimension $d$.
    Moreover, the predicate $p\in\sigma$ expressing membership
    of a point $p$ in a sequence $\sigma$, and the predicate $\sigma(i)=p$ expressing that
    $p$ is the $i^{th}$ term of $\sigma$, are both parameter-free definable.
\end{prop}

The interpretability of hereditarily sequential sets in $\mH$ has a large number of consequences
with regard to definability in $\mH$.

\begin{prop}\label{prop:unif-intermediate}
The class $\mathcal K$ of subgroups of $\Homeo(M)$ that contain $\Homeo_0(M)$
is uniformly interpretable (with parameters) in $\mH$, as definable subsets of the sort
$\HS_0(M)$.
Among the elements of $\mathcal K$ are three
canonical parameter-free interpretable subgroups, namely \[\{\Homeo_0(M),\Homeo(M), \mH\}.\]
\end{prop}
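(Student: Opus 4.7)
The approach rests on using Theorem~\ref{thm:hered-seq-homeo} to regard $\Homeo(M)$ itself, and not merely $\mH$, as a definable sort inside $\mH$. For the three canonical cases: $\Homeo(M)$ is simply the interpreted sort $\HS_0(M)$; the ambient group $\mH$ is cut out inside this sort by item (7) of Theorem~\ref{thm:hered-seq-homeo}; and $\Homeo_0(M)$ is definable within $\HS_0(M)$ by a parameter-free first order formula, which is immediate from the fact (established in \cite{dlNKK22}) that $\Homeo_0(M)$ is characterized inside $\Homeo(M)$ by an algebraic property such as being the commutator subgroup or the unique minimal nontrivial normal subgroup, depending on $M$. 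The resulting formulae are uniform in manifolds of bounded dimension.

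For an arbitrary $K\in\mathcal K$, the crucial observation is that $K/\Homeo_0(M)$ is a countable group: it sits inside the countable mapping class group $\Homeo(M)/\Homeo_0(M)$. Hence $K/\Homeo_0(M)$ is fully captured by a single sequence $\underline g\in\HS_1(M)$ whose entries enumerate a choice of coset representatives, and we take this sequence as our parameter. With respect to $\underline g$ we define
\[
K := \{h\in\HS_0(M) : \exists\, i\in\bN,\ h\cdot g_i^{-1}\in \Homeo_0(M)\}.
\]
All ingredients — group multiplication on $\HS_0(M)$, evaluation of a sequence at its $i$-th entry (item (5) of Theorem~\ref{thm:hered-seq-homeo}), and membership in $\Homeo_0(M)$ — are definable, so this is a first order definition. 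The side condition that $\underline g$ actually yield a subgroup, namely that its entries form a subgroup of $\Homeo(M)$ modulo $\Homeo_0(M)$, is likewise first order: for all $i,j$ there exists $k$ with $g_ig_jg_k^{-1}\in\Homeo_0(M)$, and for all $i$ there exists $j$ with $g_ig_j\in\Homeo_0(M)$.

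The main obstacle is ensuring parameter-free definability of $\Homeo_0(M)$ uniformly as $\mH$ ranges between $\Homeo_0(M)$ and $\Homeo(M)$: the formula from \cite{dlNKK22} is engineered to characterize $\Homeo_0(M)$ inside $\Homeo(M)$, and here we must apply it to the interpreted sort $\HS_0(M)=\Homeo(M)$ rather than to $\mH$ directly. Once this check is in place — together with the observation that the defining formula is insensitive to which intermediate $\mH$ we started from, since $\HS_0(M)$ is always the whole of $\Homeo(M)$ — the rest of the argument reduces to routine verifications built from the predicates enumerated in Theorem~\ref{thm:hered-seq-homeo}.
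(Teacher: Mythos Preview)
Your treatment of general $K\in\mathcal K$ via a sequence $\underline g$ of coset representatives, together with the definable predicate ``$h\cdot g_i^{-1}\in\Homeo_0(M)$'', is exactly the paper's argument, and your handling of $\Homeo(M)$ and $\mH$ via $\HS_0(M)$ and item~(7) of Theorem~\ref{thm:hered-seq-homeo} matches as well.

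The divergence is in how $\Homeo_0(M)$ is defined, and here your proposal has a genuine gap. The paper does \emph{not} appeal to an algebraic characterization from~\cite{dlNKK22}; instead it proves a separate Proposition~\ref{prop:interpret-homeo0} using the Edwards--Kirby fragmentation theorem: $g\in\Homeo_0(M)$ if and only if $g$ is a finite product of homeomorphisms each supported in a single element of a fixed cover by collared balls (such homeomorphisms are isotopic to the identity by the Alexander trick). This is uniform in manifolds of bounded dimension and expressible using finite tuples in $\HS_1(M)$. Your proposed substitutes are problematic: the commutator subgroup of $\Homeo(M)$ need not equal $\Homeo_0(M)$ (it is $\Homeo_0(M)$ times the preimage of $[\Mod(M),\Mod(M)]$, and $\Mod(M)$ is typically nonabelian), and your phrase ``depending on $M$'' directly undermines the uniformity the proposition demands. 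The minimal-normal-subgroup idea can be salvaged --- one can show $g\in\Homeo_0(M)$ iff $g$ lies in the normal closure of every nontrivial element, and this is expressible once finite tuples are available --- but that requires an argument (simplicity of $\Homeo_0(M)$ plus triviality of its centralizer in $\Homeo(M)$) that you have not supplied and that is not what~\cite{dlNKK22} provides for this purpose.
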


Combining Theorem~\ref{thm:hered-seq-homeo} and Proposition~\ref{prop:unif-intermediate}, we
will be able to interpret hereditarily sequential
sets in other groups lying between $\Homeo_0(M)$ and $\Homeo(M)$, and in various parameter--free interpretable quotients such as the topological mapping class group
$\Mod(M):=\Homeo(M)/\Homeo_0(M)$.

\subsection{Group theoretic consequences of the main results}

Theorem~\ref{thm:hered-seq-homeo} immediately implies that within the first order theory of $\mH$, we have unfettered access
to the full second order theory of countable subgroups of $\Homeo(M)$; in particular, we may freely quantify over countable subgroups,
as well as their subgroups, and homomorphisms between them. Since $\mH$ also interprets second order arithmetic, we may
uniformly interpret combinatorial (and even analytic) group theory within the first order theory of $\mH$; that is, we can encode arbitrary recursively
presented groups within second order arithmetic, and we may also manipulate them (i.e.~test for nontriviality of words, solve the conjugacy
problem, test for isomorphism, determining if a subgroup has finite index, measure the index of a finite index subgroup, test for amenability, test Kazhdan's property (T), etc.; the reader is
directed to~\cite{simpson-book} for an extensive
discussion of mathematics that can be developed within second order arithmetic). 
Observe that an abstract countable group will generally have to be specified with parameters, in the form of a sequence of
natural numbers.

For abstract finitely generated groups, the standard concepts from geometric group theory can also be interpreted, such as the
Cayley graph with respect to a finite generating set, growth,
hyperbolicity, and quasi-isometry.

Below, we give a (non-exhaustive)
list some concepts that can be encoded within the elementary theory of $\mH$.

\begin{thm}\label{thm:gp-thy-conseq}
The following group-theoretic sorts and predicates are parameter-free
interpretable in $\mH$, uniformly for all compact manifolds $M$ of fixed dimension.
\begin{enumerate}
\item
Countable subgroups of $\Homeo(M)$ and their full second order theory.
\item
The topological mapping class group $\Mod(M)$ of $M$,
i.e.~the group \[\pi_0(\Homeo(M))=\Homeo(M)/\Homeo_0(M),\]
and the full second order theory
of $\Mod(M)$.
\item
For a sequence $\underline g$ of homeomorphisms or mapping classes, the membership
predicate for the subgroup $\langle\underline g\rangle$.
\item
Finite generation and finite presentability of arbitrary countable subgroups of $\Homeo(M)$ or $\Mod(M)$.
\item
Residual finiteness of arbitrary countable subgroups of $\Homeo(M)$ and $\Mod(M)$.
\item
Linearity of arbitrary countable subgroups of $\Homeo(M)$ and $\Mod(M)$, i.e.~a predicate which holds if and only if the corresponding
group is linear over a field of characteristic zero.
\item
A predicate expressing isomorphism with a particular group that is parameter-free definable in
second order arithmetic
(e.g.~isomorphism with some finite index subgroup of $\on{SL}_n(\bZ)$).
\item For a finitely generated subgroup of $\Homeo(M)$ or $\Mod(M)$, a predicate expressing
whether this group
is amenable of has Kazhdan's Property (T).
\end{enumerate}
\end{thm}

Thus, the first order theory of $\mH$ encodes many well-known conjectures
as elementary properties of homeomorphism groups. These include the linearity of mapping
class groups of compact $2$--manifolds (see~\cite{FM2012} for a general reference, and 
Question 1.1 of~\cite{Margalit-2019}), property (T) for mapping class groups of compact $2$--manifolds,
finite presentability of the Torelli group of a compact $2$--manifold (see~\cite{Putman2009,MP2008}, and especially Section 5 of~\cite{Margalit-2019})
the existence of an infinite, discrete, property (T) group of homeomorphisms of the 
circle (see~\cite{Duchesne23,Navas2002ASENS,BFGM2007AM}, and especially Question 2 of
~\cite{Navas-icm}), the amenability of Thompson's
group $F$~\cite{CFP1996,BurilloBook}, and many cases of the Zimmer program (i.e.~faithful continuous
actions of finite index subgroups of lattices in semisimple Lie groups
on compact manifolds~\cite{Fisher2011,Fisher2020,Fisher2020a,BFH2020,BFH2022}).

\subsection{Descriptive set theory}\label{ss:descriptive}

Much of the foregoing discussion treats $\Homeo(M)$ as a discrete group.
We wish to observe further that the first order theory of $\mH$ recovers the topology of $\Homeo(M)$,
and in fact the full projective hierarchy of subsets of $\Homeo(M)$.
More precisely,
we have the following.

\begin{thm}\label{thm:descriptive}
The following sorts are uniformly interpretable in $\mH$, viewed as a subset of $\HS_0(M)$,
uniformly in manifolds of fixed dimension:
\begin{enumerate}
\item
Open and closed sets in $\Homeo(M)$.
\item
Borel sets in $\Homeo(M)$, and the full Borel hierarchy of $\Homeo(M)$.
\item
The projective hierarchy in $\Homeo(M)$.
\end{enumerate}
The membership predicate $\in$ is parameter-free interpretable for sets in these sorts.

Moreover, the topology of $\mH$, as well as the Borel hierarchy
and projective hierarchy of $\mH$ are all uniformly definable among manifolds of
bounded dimension.
\end{thm}

As a consequence, we will obtain
the following general fact about definable subsets of $\Homeo(M)$:

\begin{thm}\label{thm:definable}
    A set is definable (with parameters) in $\mH$ if and only if it lies in the projective
    hierarchy.
\end{thm}

\subsection{Undefinability  and independence}

As is implicit from the
uniform parameter--free interpretation of second order arithmetic in $\mH$ as produced in~\cite{dlNKK22}, not only is the 
first order theory
of $\mH$ (and of $\Homeo(M)$ in particular)
undecidable, but in fact there are elementary properties of homeomorphism groups of manifolds whose validity is independent of
ZFC. A question therefore is whether
or not there are ``natural" first order group theoretic statements in $\mH$ that are independent of ZFC, and this is unclear to the authors.

There are also
many natural undefinable sets in arithmetic which are directly related to 
compact manifolds and their homeomorphism groups, which
we record here. Manifolds and their homeomorphism groups can be
formalized in second order arithmetic; however, there is some sense in which the manifold homeomorphism group
recognition problem is at least
as complicated as full true second order arithmetic,
which we now make precise.

Choosing
a numbering of the language of groups, we obtain a G\"odel numbering of sentences in group theory. For a fixed compact manifold
$M$, one can consider the set of sentences in group theory (viewed as a subset of $\bN$ via their G\"odel numbers) which isolate
$\Homeo(M)$. Similarly, one may consider the set of sentences in group theory which isolate some isomorphism
type of compact manifold homeomorphism group. It turns out that neither of these sets is definable in arithmetic. For a sentence $\psi$, we
write $\#\psi$ for its G\"odel number with respect to a fixed numbering of the language.

\begin{thm}\label{thm:undefinable}
Let $M$ be a fixed compact manifold and let $N$ be an arbitrary compact manifold.
\begin{enumerate}
\item
The set
\[\on{Sent}_M:=\{\#\psi\mid (\Homeo(N)\models \psi) \longleftrightarrow (M\cong N)\}\] is not definable in second order arithmetic.
\item
The set \[\on{Sent}:=\{\#\psi\mid \textrm{$\psi$ isolates $\Homeo(N)$ for some compact manifold $N$}\}\] is not definable in 
second order arithmetic.
\end{enumerate}
In particular, these sets
are not decidable.
\end{thm}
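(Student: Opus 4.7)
The plan is to produce a recursive many-one reduction of the first order theory $\Th(\Homeo(M))$ to each of $\on{Sent}_M$ and $\on{Sent}$, and then to rule out arithmetic definability of $\Th(\Homeo(M))$ by a Tarski undefinability argument transported through the interpretation of second order arithmetic (SOA) in $\Homeo(M)$.

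The two key inputs, both established in \cite{dlNKK22}, are: (a) a parameter-free interpretation of full SOA inside $\Homeo(M)$, and (b) the existence of an isolating sentence $\theta_M$ in the language of groups, i.e.~for every compact manifold $N$, $\Homeo(N)\models\theta_M$ if and only if $N\cong M$. From (a), if the set $\Th(\Homeo(M))=\{\#\psi\mid \Homeo(M)\models\psi\}\subseteq\bN$ were definable in SOA, then pre-composing such a definition with the (primitive recursive) syntactic translation implementing the interpretation would yield an arithmetic definition of the truth predicate of SOA, contradicting Tarski's undefinability theorem. Hence $\Th(\Homeo(M))$ is not SOA-definable.

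The main step is the reduction, which uses (b). Given an arbitrary sentence $\phi$ in the language of groups, form $\phi\wedge\theta_M$. For every compact manifold $N$,
\[
\Homeo(N)\models \phi\wedge\theta_M \;\Longleftrightarrow\; N\cong M\ \text{and}\ \Homeo(M)\models\phi.
\]
If $\Homeo(M)\models\phi$, then $\phi\wedge\theta_M$ holds in $\Homeo(N)$ precisely when $N\cong M$, so it isolates $\Homeo(M)$ and therefore $\#(\phi\wedge\theta_M)$ lies in both $\on{Sent}_M$ and $\on{Sent}$. If $\Homeo(M)\not\models\phi$, then $\phi\wedge\theta_M$ is satisfied by no compact manifold homeomorphism group at all — any candidate $N$ would have to satisfy $\theta_M$ and hence be homeomorphic to $M$ — so it isolates nothing and $\#(\phi\wedge\theta_M)$ belongs to neither set. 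The map $\phi\mapsto \#(\phi\wedge\theta_M)$ is primitive recursive, so an SOA-definition of $\on{Sent}_M$ or of $\on{Sent}$ would pull back to an SOA-definition of $\Th(\Homeo(M))$, contradicting the previous paragraph. Since SOA encompasses first order arithmetic, in particular neither set is arithmetically decidable.

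The main obstacle is conceptual rather than technical: one must keep careful track of two distinct notions of ``definability'' — namely, definability of subsets of $\bN$ via SOA formulas versus definability in the group $\Homeo(M)$ — and transport Tarski's theorem across the interpretation supplied by \cite{dlNKK22}. Once this bookkeeping is in place, both parts of the theorem reduce to the elementary conjunction-with-$\theta_M$ trick above.
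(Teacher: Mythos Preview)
Your proof is correct and follows essentially the same route as the paper's: Tarski undefinability transported through the interpretation of second order arithmetic, combined with the conjunction-with-an-isolating-sentence trick. The only difference is organizational --- you factor through the intermediate fact that $\Th(\Homeo(M))$ is not SOA-definable and then reduce via arbitrary group sentences $\phi\mapsto\phi\wedge\theta_M$, whereas the paper reduces directly from arithmetic sentences via $\phi\mapsto\psi\wedge\tilde\phi$ --- but the underlying mechanism is identical.
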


In Theorem~\ref{thm:undefinable}, the group $\Homeo$ can be replaced by any group lying between $\Homeo_0$ and $\Homeo$.
We will show in Section~\ref{sec:undefinable} that membership of G\"odel numbers in $\on{Sent}_M$ or $\on{Sent}$ cannot be proved
within ZFC.

More generally than Theorem~\ref{thm:undefinable}, we will prove that for any
class $\mathcal M$ of compact manifold homeomorphism groups which is isolated by a
single sentence,
the set of G\"odel numbers of sentences isolating $\mathcal M$ is undefinable in second
order arithmetic; this gives an analogue of Rice's Theorem (i.e.~nontrivial classes of
partially recursive functions are not computable) for
homeomorphism groups of manifolds. In fact, we will prove that
if $\mathcal F$ consists of nonempty sets of homeomorphism groups of compact manifolds
which are
isolated by first order sentences, and if
$A\subseteq F$ is proper, then the set of G\"odel numbers of sentences isolating
elements of $A$ is not definable in second order arithmetic. See Theorem~\ref{thm:rice}
and Theorem~\ref{thm:rice-strong} for precise statements.

\subsection{Organization of the paper}

In Section~\ref{sec:background}, we gather preliminary material about topological manifolds and the first order theory of homeomorphism
groups of manifolds. Section~\ref{sec:hereditary} proves Theorem~\ref{thm:hered-seq-homeo}, the main result of the paper.
Section~\ref{sec:mcg} interprets mapping class groups of manifolds as well as intermediate subgroups lying between
$\Homeo_0$ and $\Homeo$ of manifolds, and discusses Theorem~\ref{thm:gp-thy-conseq}. Section~\ref{sec:descriptive} discusses descriptive set theory and the projective hierarchy
in $\Homeo(M)$.
 Section~\ref{sec:undefinable} proves Theorem~\ref{thm:undefinable} and the analogues of
 Rice's Theorem.

Throughout this paper, we have tried to balance mathematical precision with clarity. To give completely precise and explicit
formulae is possible, though
extremely unwieldy and unlikely to yield deeper insight. Thus, we have often
avoided giving explicit formulae, either explaining
how to obtain them in English with enough precision that the formulae could be produced if desired, or we have avoided them entirely
when certain predicates are obviously
definable in second order arithmetic or in the countable second order theory of a group.

\section{Background}\label{sec:background}
We first gather some preliminary results. Throughout, we will always assume that all manifolds are compact, connected, and second countable.

\subsection{Results from geometric topology of manifolds}
We will appeal to the following fact about compact topological manifolds. We write $B(i)\subset\bR^d$ for the
closed ball of radius
$i$ about the origin. We write $H(i)\subset\bR^d_{\geq 0}$ for the half-ball of radius $i$ about the origin in the half-space $\bR^d_{\geq 0}$.
That is, $H(i)=B(i)\cap \bR^d_{\geq 0}$.
A \emph{collared ball} in a $d$--dimensional
manifold $M$ is a map \[B(1)\longrightarrow M\] which is a homeomorphism onto its image, and
which extends to a homeomorphism of $B(2)$ onto its image, and a \emph{collared half-ball} in a manifold with boundary
is defined analogously in the usual sense, so that the image of the origin in $\R^d$
lands in the boundary $\partial M\subseteq M$
and the intersection of the image of $H(i)$ with 
$\partial M$ is a collared open ball in $\partial M$.

An open set in $M$ is \emph{regular} if it is equal to the interior of its closure.
We will say that a regular open set $U$ is a \emph{regular open collared ball} if it is the interior
of a collared open ball. A \emph{regular open collared half-ball} is a regular open set that is the interior of a collared half-ball. A
regular open collared half-ball meets the boundary of $M$ in a regular open collared ball.

\begin{prop}[See Chapter 3 in~\cite{coornaert2015}, Theorem IV.2 in~\cite{HW1941}, Theorem 3 in~\cite{ostrand71}, Section 6.1 in~\cite{dlNKK22}]\label{prop:bounded cover}
Let $M$ be a compact, connected manifold of dimension $d$. Then there exists a computable function $n(d)$ such that the following
conclusions hold.
\begin{enumerate}
\item
If $M$ is a closed topological manifold then there exist
$n(d)$ collections of disjoint collared balls $\{B_1,\ldots,B_{n(d)}\}$ such that \[M=\bigcup_{i=1}^{n(d)} B_i.\]
\item
If $\partial M\neq\varnothing$ then the following conclusions hold.
\begin{enumerate}
\item
For every collar neighborhood $U\supseteq \partial M$, there exist collections
of disjoint collared balls $\{B_1,\ldots,B_{n(d)}\}$
and collections of disjoint collared half-balls $\{H_{1},\ldots,H_{n(d-1)}\}$
such that \[M\setminus U\subseteq \bigcup_{i=1}^{n(d)} B_i\] and such that
\[\cl U\subseteq \bigcup_{j=1}^{n(d-1)} H_{j}.\]
\end{enumerate}
\end{enumerate}
\end{prop}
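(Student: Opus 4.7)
The plan is to reduce the statement to Ostrand's classical decomposition theorem for open covers of finite-dimensional metric spaces. Ostrand shows that for a metric space of covering dimension $n$, every open cover admits an open refinement that partitions into $n+1$ families, each consisting of pairwise disjoint open sets. Since a compact topological $d$-manifold is metrizable and has covering dimension $d$, the natural candidate is $n(d) = d+1$, which is manifestly computable. The technical work lies in upgrading the abstract open refinement supplied by Ostrand to one whose members are actually collared balls (or half-balls near $\partial M$).

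For the closed case, start with a finite cover $\mathcal{B}_0$ of $M$ by collared balls, which exists by compactness together with the fact that every point has a coordinate neighborhood inside which small balls are collared. Apply Ostrand to $\mathcal{B}_0$ to obtain a refinement $\mathcal{V}=\mathcal{V}_0\sqcup\cdots\sqcup\mathcal{V}_d$ in which each family $\mathcal{V}_k$ is pairwise disjoint. For each point $p\in M$, pick some $V\in\mathcal{V}$ containing $p$ and choose a collared ball around $p$ contained in $V$; this is possible because $V$ lies inside a coordinate chart from $\mathcal{B}_0$. Extract a finite subcover by compactness of $M$, and color each member by the color of the parent $V$ in which it lies. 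The resulting $d+1$ families cover $M$ and are pairwise disjoint within each color class, since same-colored collared balls lie inside disjoint parents.

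For the boundary case, treat $M\setminus U$ and $\cl U$ separately. The set $M\setminus U$ is compact and sits in the interior of $M$, so the closed-case argument applied locally produces $n(d)$ pairwise disjoint families of collared balls covering it. For $\cl U$, apply the closed-case argument to the compact $(d-1)$-manifold $\partial M$ to cover it with $n(d-1)$ pairwise disjoint families of collared balls in $\partial M$; each such $(d-1)$-ball thickens canonically into a collared half-ball in $M$ using the product structure on the collar $U\cong\partial M\times[0,1)$, and the thickenings cover $\cl U$ while inheriting pairwise disjointness within each color class.

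The principal obstacle is the passage from an abstract open refinement to a cover by collared balls: Ostrand's theorem produces arbitrary open sets, whereas the proposition demands sets realized as standard balls with extendable closures. This is addressed by the point-wise shrinking step above, which crucially uses that the refined open sets lie inside coordinate charts and hence contain collared balls around each of their points. Beyond this, the argument is a straightforward compactness bookkeeping exercise.
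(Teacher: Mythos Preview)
There is a genuine gap in the disjointness step. After Ostrand hands you families $\mathcal{V}_0,\ldots,\mathcal{V}_d$ of pairwise disjoint open sets and you shrink pointwise to collared balls and pass to a finite subcover, you assert that ``same-colored collared balls lie inside disjoint parents.'' That is not true as stated: nothing prevents two of the surviving collared balls from lying inside the \emph{same} parent $V\in\mathcal{V}_k$, and such balls share a colour but have no reason to be disjoint. Ostrand's theorem only separates balls coming from \emph{different} parents within a colour class. Since an individual $V$ is an arbitrary open subset of a chart, covering it may genuinely require many overlapping collared balls, so the issue is not cosmetic.

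The paper avoids this by running dimension theory twice. It first embeds $M$ in $I^{2d+1}$ and covers the cube by $2d+2$ families of small, uniformly $\delta$-separated Euclidean balls, with diameters below a Lebesgue number so that each ball meets $M$ inside a single chart. Then, \emph{within} each ambient component $V$, it covers $V\cap M$ by collared balls of $M$ confined to a $\delta/3$-neighbourhood of $V$ and refines that local cover---now taking place inside a copy of $\bR^d$, where collared balls are just small round balls and the explicit brick-type constructions from Lebesgue covering dimension apply directly---into roughly $d+1$ disjoint subfamilies. Balls arising from distinct ambient components of the same $B_i$ stay disjoint by the $\delta/3$ buffer, and balls from the same ambient component are disjoint by the second refinement. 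The price is $n(d)=(2d+2)(d+1)$ rather than $d+1$. Your argument can be repaired analogously by applying Ostrand a second time inside each $V$, at the cost of $(d+1)^2$ colours; alternatively you would need to invoke (and cite) a basis-respecting form of Ostrand's theorem that directly outputs collared balls as the members of the refinement.
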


In Proposition~\ref{prop:bounded cover}, note that each $B_i$ and each
$H_i$ is a (possibly
disconnected) set, each component of which a collared ball or collared half-ball,
respectively.

\begin{proof}[Proof of Proposition~\ref{prop:bounded cover}]
We will assume that $M$ is closed; the argument for manifolds with boundary is a minor variation on the proof given here.

This essentially follows from the fact that $M$ can be embedded in $\bR^{2d+1}$. Choose such an embedding, which by scaling we may assume
lies in the unit cube $I^{2d+1}$. For any positive threshold $\epsilon>0$, we may cover $I^{2d+1}$ by $2d+2$ collections of regular open
sets $\{B_1,\ldots,B_{2d+2}\}$, each consisting of disjoint collared open Euclidean balls, with each ball having diameter at most $\epsilon$.
Moreover, we may assume that any two components of any $B_i$ are separated by a distance that is uniformly bounded away from zero. These
claims follow from standard constructions in Lebesgue covering dimension; see Chapter 3 in~\cite{coornaert2015}, Chapter 50 in~\cite{Munkres-book}.

Choose an atlas for $M$ such that for an arbitrary component $V$ of some $B_i$,
we have that the intersection $V\cap M$ lies in a coordinate
chart. This can be achieved by setting $\epsilon$ small enough with respect to a fixed
atlas for $M$, as follows from the Lebesgue Covering Lemma.

Let $U\cong\bR^d$ be such a coordinate chart of $M$ and let $B=B_i$ for some $i$.
Then, $U\cap B$ is a collection of open sets which are
separated by a definite distance $\delta>0$ which is independent of $U$.
For any component $V\in\pi_0(B)$ such that $V\cap M$ is entirely contained in $U$,
we may cover
$V\cap M$ with collared open balls (in $M$) which are contained in a $\delta/3$ neighborhood of the closure of $V$ in $M$.
This covering may
be further refined to be a covering by regular collared balls having order at most $d+2$;
in particular, the closure of $V$ is covered by at most $d+2$ collections of regular
open sets, whose components consist of disjoint collared open balls. Repeating this construction for each component $V\in\pi_0(B)$,
we obtain a collection of $d+1$ regular open sets whose components are collared open balls that cover $B\cap M$. Allowing
$B$ to range over $\{B_1,\ldots,B_{2d+2}\}$, we obtain $(d+1)(2d+2)$ regular open sets covering $M$, all of whose components are
collared open balls, as desired.
\end{proof}

The importance of Proposition~\ref{prop:bounded cover} is that many of the formulae we build in this paper will be uniform in the underlying
manifold, provided that the dimension is bounded. This is reflected in the dependence
of $n(d)$ on $d$. The proof of the following corollary
is straightforward, and we omit it.

\begin{cor}\label{cor:single ball}
Let $M$ be a compact, connected manifold of dimension $d$, and let $n(d)$ be as in
Proposition~\ref{prop:bounded cover}.
\begin{enumerate}
\item
If $M$ is closed then $M$ can be covered by $n(d)$ regular open collared balls.
\item
If $\partial M\neq \varnothing$ and if $N$ is a component of $\partial M$, then there is a tubular neighborhood of $N$ whose
closure can be covered
by $n(d-1)$ regular open collared half-balls. Moreover, for all tubular neighborhoods $U\supseteq\partial M$, we have $M\setminus U$
can be covered by $n(d)$ regular open collared balls.
\end{enumerate}
\end{cor}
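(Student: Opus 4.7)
The plan is to treat Corollary~\ref{cor:single ball} as a repackaging of Proposition~\ref{prop:bounded cover}, where each of the $n(d)$ collections of disjoint collared balls (respectively $n(d-1)$ collections of disjoint collared half-balls) produced by the proposition is reinterpreted as a single regular open set whose connected components are the collared (half-)balls in the collection. Under this reading, ``a regular open collared ball" is being used in the slightly broader sense of ``a regular open set each component of which is a collared open ball." Once this terminological point is set, essentially no additional geometry is required.

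First I would verify that each collection $B_i=\bigsqcup_\alpha V_\alpha$ of disjoint collared balls produced in the proposition is genuinely a \emph{regular} open subset of $M$. The key input here is that in the construction in the proof of Proposition~\ref{prop:bounded cover}, the components of each $B_i$ lie in pairwise disjoint closed sets separated by a positive distance $\delta>0$. For such a disjoint union, closure and interior distribute over components:
\[
\inte(\cl{B_i}) \;=\; \inte\Bigl(\bigsqcup_\alpha \cl{V_\alpha}\Bigr) \;=\; \bigsqcup_\alpha \inte(\cl{V_\alpha}) \;=\; \bigsqcup_\alpha V_\alpha \;=\; B_i,
\]
where the last two equalities use that each $V_\alpha$ is itself the interior of a collared ball, hence already regular.

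For part (1), I would then invoke Proposition~\ref{prop:bounded cover}(1) and apply the above observation to each of the $n(d)$ collections, yielding $n(d)$ regular open sets whose components are collared open balls and whose union is $M$. For part (2), I would apply Proposition~\ref{prop:bounded cover}(2) in two pieces. For the statement about a single boundary component $N$, I would choose the collar $U\supseteq\partial M$ from the proposition and restrict the cover by collared half-balls to just those components whose image meets $N$; since $N$ is one component of $\partial M$, this restriction still consists of at most $n(d-1)$ collections of disjoint collared half-balls and its union contains a tubular neighborhood of $N$ (shrinking the collar if needed to stay away from other components). For the complementary statement, the bound $n(d)$ on collections of collared balls covering $M\setminus U$ is immediate from the proposition, and the same regularity verification as above packages each collection as a single regular open set.

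The main, and essentially only, obstacle is the bookkeeping surrounding the terminology: making sure that ``regular open collared ball'' in the corollary is legitimately interpreted as a regular open set whose components are collared balls, and that the disjoint-components-with-positive-separation property established in the proof of the proposition is actually invoked to certify regularity. No manifold-topological content beyond what Proposition~\ref{prop:bounded cover} already provides is needed, and the function $n(d)$ is inherited without modification.
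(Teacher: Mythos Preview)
Your proposal rests on reinterpreting ``regular open collared ball'' as ``regular open set whose components are collared balls,'' but that is not what the term means in the paper: a collared ball is by definition the image of a single embedded $B(1)$ extending to $B(2)$, and a regular open collared ball is the interior of one such ball---in particular, connected. With the correct reading, the corollary is \emph{not} a restatement of Proposition~\ref{prop:bounded cover}: it asserts that $M$ can be covered by $n(d)$ \emph{connected} collared balls, whereas the proposition only gives $n(d)$ disjoint \emph{collections} of such balls. This distinction matters downstream: in Section~\ref{sec:hereditary} each $U_i$ is identified with a single collared ball $W_i^s\subset U_s$ via one element of $\mH$, using transitivity of $\mH$ on collared balls, which fails if $U_i$ has multiple components.

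The missing geometric step is to merge the components of each $B_i$ into one ball. The paper does this by observing that in dimension $\geq 2$ the complement $M\setminus B_i$ is connected, so one can join the components of $B_i$ by pairwise disjoint properly embedded arcs (taken piecewise linear in charts so they admit tubular neighborhoods), chosen so that the union of $B_i$ with the arcs is contractible; thickening the arcs then yields a single regular open collared ball containing $B_i$. In dimension one the claim is handled directly. Your regularity verification for disjoint unions is fine but beside the point---the real content is this connecting-by-arcs argument, which your proposal omits entirely.
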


\subsection{Results about the first order theory of homeomorphism groups of manifolds}
The present paper builds on the results of the authors' joint paper with Kim~\cite{dlNKK22}.
In that paper, we investigated the first order theory of $\Homeo(M)$ for a compact manifold
$M$, and in particular proved that each group $\Homeo(M)$ is quasi-finitely axiomatizable
within the class of homeomorphism groups of manifolds.

The central result of this paper is the interpretation of $\HS(M)$, which does not follow
from the paper~\cite{dlNKK22}. However, we shall require tools which were developed in that
paper in order to prove the results in this paper. We will briefly list the relevant results
that we use here. In the following theorem, if $U\subseteq M$ is an open set and
$G\leq\Homeo(M)$, then we write $G[U]$ for the \emph{rigid stabilizer} of $U$, consisting
of all elements of $G$ which are the identity outside of $U$.

The following result follows from the fact that $\mH$ conservatively interprets, without
parameters, a structure called AGAPE; see Section 3 of~\cite{dlNKK22}. We have given
more precise citations for most enumerated
statements that refer to ~\cite{dlNKK22}. The statements below differ slightly from the way they are stated in
~\cite{dlNKK22} in order to better serve our purposes,
though there is no difference in content.

\begin{thm}[See~\cite{dlNKK22}]\label{thm:kkdlng}
Let $M$ be a compact, connected, topological manifold of dimension at least one,
and let \[\Homeo_0(M)\leq \mH\leq \Homeo(M).\] Then there exists a sentence
$\psi_M$ in the language of group theory such that for all compact manifolds $N$ and all subgroups
\[\Homeo_0(N)\leq\mH'\leq \Homeo(N),\] we have $\mH'\models \psi_M$ if and only if $N\cong M$. Moreover,
the following sorts and predicates are interpretable without parameters in $\mH$, uniformly in $M$.
\begin{enumerate}
\item
The Boolean algebra $\ro(M)$ of regular open sets of $M$, equipped with an action of $\mH$;
that is, a predicate \[\on{Act}\subseteq \mH\times \ro(M)\times\ro(M)\] such that $(g,U,V)\in
\on{Act}$ if and only if $g(U)=V$ in $M$; the interpretation of $\ro(M)$ is uniform for
all manifolds, including noncompact ones. See Section 2.2 and Theorem 3.4.
\item Predicates expressing connectedness of regular open sets, as well as that a regular
open set $U$ is a connected component of a regular open set $V$.
See Lemma 3.6 and Corollary 3.7.
\item\label{rcb1} A predicate $\on{RCB}\subseteq\ro(M)$ such that $U\in\on{RCB}$ if and only if
the closure of $U$ lies in a collared open ball in $M$. See Lemma 3.10.
\item\label{rcb2} A predicate $\on{RCB}^{\partial}\subseteq\ro(M)$
such that $U\in\on{RCB}^{\partial}$ if and only if
the closure of $U$ lies in a collared open half-ball in $M$.
\item\label{item:components}
Second order arithmetic $(\N,0,+,\times,<,\subset)$, and a definable predicate
\[\#\subseteq \N\times\ro(M)\] such that $(n,U)\in\#$ if and only if $U$ has exactly $n$
components;
moreover, if $\varnothing\neq U\in\ro(M)$, then
second order arithmetic can be interpreted using only $U$ and $\mH[U]$. See Section 4.
\item\label{c:points}
Points $\mathcal P(M)$ of $M$, and more generally finite tuples $\mathcal P^{<\infty}(M)$
of points in $M$; moreover, a predicate $\in_{\mathcal P}\subseteq \mathcal P(M)\times \ro(M)$ such
that $(p,U)$ lies in $\in_{\mathcal P}$ if and only if the statement 
$p\in U$ is true in $M$. See Section 5.
\item Predicates expressing that a point of $M$ belongs to a union of two regular open sets,
and that a point belongs to the closure of a regular open set. See Section 5.
\item\label{c:cover} For each $n$, predicate expressing that a collection of $n$ regular open sets covers
the closure of a regular open set $U$.
\item 
Exponentiation, i.e.~a definable function \[\exp\colon \mH\times\Z\times M\longrightarrow 
M\]
with the property that \[\exp(g,n,p)=
g^n(p) \quad\textrm{in $M$}.\] See Section 5.3.
\item A predicate which holds for a regular open set $U$ if and only if $U$
contains a tubular neighborhood of $\partial M$ in $M$. See Theorem 7.1.
\end{enumerate}
\end{thm}

In view of Theorem~\ref{thm:kkdlng}, we will assume that $\mH$ is implicitly equipped
with the sorts of regular open sets of $M$, second order arithmetic, and points, as
well as the relevant predicates listed in the theorem.

Some items in Theorem~\ref{thm:kkdlng} require special comment.
Item~\ref{rcb1} was only formally proved for manifolds of dimension 2 or higher, though
for manifolds of dimension one, the proof is even easier. By the characterization of connected
sets in one-manifolds, it suffices to express that $U$ is contained in a connected regular
open set $V$, and that there is a homeomorphism $h$ of $M$ such that $V\cap h(V)=\varnothing$.

Item~\ref{rcb2} was not formally stated in~\cite{dlNKK22}, though it is not difficult
to find such a formula. One expresses that a regular open set $U$ accumulates on a single
component $N$ of $\partial M$, as is easily deduced from 3.4.3. One then requires the
existence of a homeomorphism $h$ fixing each component of the boundary of $M$, which
moves $U$ into an arbitrary half-ball in $N$; half-balls are interpreted explicitly in
Section 7 of~\cite{dlNKK22}.

In item~\ref{c:points},
a point $p\in M$ is encoded by an equivalence class of regular open sets, up to
definable equivalence. If $U\subseteq M$ is a regular open set and $p\in U$ then there is
a regular open set $V\subseteq U$ which encodes or \emph{isolates} $p$; this is implicit in Section 5 of
~\cite{dlNKK22}. In particular, if $U$ is a regular open set with infinitely many components
$\{U_i\}_{i\in\N}$ and if $p_i\in U_i$ is a point for each $i$, then the set of points
$\bigcup_i p_i$ is encoded by a single regular open set $V$, which has the property that
$V\subseteq \bigcup_i U_i$ and such that $V\cap U_i$ encodes the point $p_i$. We will
abbreviate the predicate $\in_{\mathcal P}$ by $\in$.

Observe that the exponentiation function, together with the membership predicate relating
$\mathcal P(M)$ to $\ro(M)$ allows one to express that $g^n(U)=V$ for group elements in $\mH$,
integer exponents, and pairs of regular open sets, since we may express that
\[\exp(g,n,p)\in V \leftrightarrow p\in U.\]

The sentence $\psi_M$ in Theorem~\ref{thm:kkdlng} is said to \emph{isolate} $M$
(or its homeomorphism group).
We note that in~\cite{dlNKK22}, the proof of the content of
Theorem~\ref{thm:kkdlng} was given for manifolds of dimension at least two. This was done
purely to simplify some of the arguments and shorten the exposition; the proofs themselves can easily be generalized to manifolds of dimension
one.

We note that even though we will refer to collared 
balls and half-balls in the sequel,
these are concepts in the metalanguage; we will never appeal to these objects directly
in the formal language.

To make one further observation about the relationship between $\Homeo(M)$, its countable
subgroups, and arithmetic, we remark the following: $\Homeo(M)$ clearly contains
many countable subgroups that are definable in arithmetic, including cyclic groups and
free groups. Some subgroups of $\Homeo(M)$ are in fact bi-interpretable with
first order arithmetic, such as Thompson's groups $F$ and $T$ by~\cite{lasserre}; it is not difficult to
show that $F$ in fact arises as a subgroup of $\Homeo(M)$ for all positive dimensional
manifolds. Most countable subgroups of $\Homeo(M)$ are not definable in first order
arithmetic, simply because $\Homeo(M)$ interprets second order arithmetic. Indeed, then
any countable elementary subgroup of $\Homeo(M)$ (which exists by the L\"owenheim--Skolem
Theorem) has too complicated a theory to be interpretable in arithmetic. The reader
may find a more detailed discussion in the authors' paper~\cite{KdlN24}.

\section{Hereditarily sequential sets of homeomorphisms of a manifold}\label{sec:hereditary}

Let $M$ and $\mH$ be as above and fixed, and fix the notation $d\geq 1$ for the dimension of $M$.
In this section, we prove Theorem~\ref{thm:hered-seq-homeo}; the uniformity of the
interpretation among manifolds of a fixed dimension will be clear, and by taking disjunctions
we obtain an interpretation that is valid for all manifolds of dimension bounded by
a prescribed constant D. We prove
the result in several steps.

\subsection{Interpreting $\mathrm{HS}_0(M)$}
We begin by interpreting the sort $\HS_0(M)$ in $\mH$, and show that its members
canonically correspond to elements of $\Homeo(M)$. This itself is done in several steps.
The reader should remember for the duration of the proof that we are encoding a
homeomorphism of $M$ by a proxy for its graph; the reader may pretend $M$ is closed
on a first reading, for simplicity.

The scheme for finding parameter-free interpretations of new sorts in $\mH$ will follow
the basic scheme:
\begin{enumerate}
    \item Encode data describing the new sort within various sorts of
    topological data to which we have
    access in view of Theorem~\ref{thm:kkdlng}; oftentimes this data requires making
    choices, which amounts to an interpretation with parameters.
    \item Observe that the set of suitable parameters is itself parameter-free definable
    within the relevant sort.
    \item Eliminate parameters by quantifying over the relevant space of parameters.
\end{enumerate}

The basic idea to interpret $\HS_0(M)$ is to fix a finite cover of $M$,
move the charts in the cover to a single
chart in $M$ (forming a finite set of ``pages"), and then taking countably many disjoint
copies
of these pages. In each copy, we choose a point, which gives us the intermediate
result of being able to interpret the sort of countable sequences of points in
$M$; since points in $M$ are encoded by equivalence
classes of regular open sets in $M$ wherein only the local structure of the open set
near the point being encoded matters, we may encode the countable sequence of points by
a single suitable equivalence class of regular open sets. By considering a sequence
$\sigma$ of points in $M$, we may consider the odd and even index points in $\sigma$, thus
obtaining a countable collection of points in
$M\times M$. We then place definable conditions on such pairs to make sure the points
occurring in each coordinate are dense in $M$, and so that these pairs actually arise from
the graph of a homeomorphism of $M$. We have included some figures to aid the reader.

\begin{lemma}\label{lem:point-seq}
    The group $\mH$ admits a parameter-free interpretation of the sort $\on{seq}(M)$ of
    countable sequences of points in $M$, which is uniform for all manifolds of
    dimension $d$. Moreover, the predicate $p\in\sigma$ expressing membership
    of a point $p$ in a sequence $\sigma$, and the predicate $\sigma(i)=p$ expressing that
    $p$ is the $i^{th}$ term of $\sigma$, are both parameter-free definable.
\end{lemma}

For technical reasons, we first prove the lemma in the case where $M$ is not the interval,
and give an adapted proof for the interval later.

\begin{proof}[Proof of Lemma~\ref{lem:point-seq} for $M\neq I$]
   We retain the notation $n(d)$ from
Proposition~\ref{prop:bounded cover}. 

{\bf Choose a cover $M$.} We first fix a collection of regular open sets in $M$ of
bounded cardinality (depending on $d$) which cover $M$, and which can be used as charts in
an atlas
for $M$. Fix a collar neighborhood $K$ of $\partial M$ in $M$. Since $M$ has dimension $d$
and $\partial M$ has dimension $d-1$, Proposition~\ref{prop:bounded cover} shows that
$M\setminus K$ can be covered by $n(d)$ regular open sets, each component of which is a
collared open ball, and each component of $K$ can be cover by $n(d-1)$ such sets consisting
of collared open half-balls. By Items~\ref{rcb1}, \ref{rcb2}, and~\ref{c:cover} of Theorem~\ref{thm:kkdlng}, we
may express the existence of collections
\[\mathfrak B=\{U_1,\ldots,U_{n(d)}\}\quad\textrm{and} \quad \mathfrak H =\{V_1,\ldots,V_{n(d-1)}\}\] such that:
\begin{enumerate}
    \item The sets $\mathfrak B\cup\mathfrak H$ cover $M$; this is expressible since
    we simply require every point of $M$ to lie in an element of
    $\mathfrak B\cup\mathfrak H$.
    \item For $W\in \mathfrak B\cup\mathfrak H$ and $W_0$ a component of $W$, the closure of
    $W_0$ is contained inside of a collared open ball or open half-ball depending on
    whether $W\in\mathfrak B$ or $W\in \mathfrak H$, respectively.
\end{enumerate}

Observe that the components of the sets $U_i$ and $V_i$ need not themselves be balls or
half-balls, only have their closures be contained inside of balls or half-balls. Since
the collections $\mathfrak B$ and $\mathfrak H$ have bounded cardinality depending only
on $d$, the parameter space of choices for $(\mathfrak B,\mathfrak H)$ is itself
parameter-free definable.

The number of charts required in the atlas is the only part of the proof which depends
on the dimension of $M$. All other dependencies on dimension fundamentally arise from the
number of charts in the atlas.

{\bf Initializing a scratchpad.}
A schematic illustration of the initialized scratchpad is given in Figure~\ref{fig:iteration-inside-M}.
Fix regular open sets $W$ and $W^{\partial}$ with the following
properties:
\begin{enumerate}
    \item The closure of $W$ is contained in a collared open ball in $M$.
    \item For all components $W_0$ of $W^{\partial}$, the closure of $W_0$ is contained in a collared
    open half-ball in $M$.
    \item If $\hat W$ is an arbitrary regular open set whose
    closure is contained in a collared open half-ball in $M$, then there exists an element
    $g\in\mH$ such that $g(\hat W)\subseteq W^{\partial}$.
    \item Each component of $\partial M$ meets at most one component of $W^{\partial}$.
\end{enumerate}
It is straightforward to see that, in view of Theorem~\ref{thm:kkdlng},
the conditions defining $W$ and $W^{\partial}$ are expressible, and
that such $W$ and $W^{\partial}$ always exist. Next, choose elements $\{g_U\mid U\in\mathfrak B\}$
and $\{g_V\mid V\in\mathfrak H\}$ such that:
\begin{enumerate}
    \item For all $U\in \mathfrak B$ and $V\in\mathfrak H$, we have $g_U(U)$ has compact
    closure inside of $W$ and
    $g_V(V)$ has compact closure inside of $W^{\partial}$.
    \item For distinct $U_1, U_2\in\mathfrak B$, the images $g_{U_1}(U_1)$ and $g_{U_2}(U_2)$
    are disjoint; we place the same requirement on
    distinct elements of $\mathfrak H$. Let \[U_0=\bigcup_{U\in\mathfrak B} g_U(U)\] and
    \[V_0=\bigcup_{V\in\mathfrak H} g_V(V).\]
    \item Choose elements $g_0\in \mH[W]$ and $g_0^{\partial}\in\mH[W^{\partial}]$ such
    that for all distinct $i,j\geq 0$, we have \[g^i_0(U_0)\cap g^j_0(U_0)=\varnothing,\]
    and similarly \[(g^{\partial}_0)^i(V_0)\cap (g^{\partial}_0)^j(V_0)=\varnothing.\]
    Here, we are implicitly using the fact that we may quantify over the arguments of
    the (definable) exponentiation function.
\end{enumerate}

We write $U^i=g_0^i(U_0)$ and $V^i=(g^{\partial}_0)^i(V_0)$, respectively. The reader may
observe that this is the point where the argument fails for $M=I$, since in the case
of the interval the homeomorphism $g^{\partial}_0$ may not exist.

\begin{figure}\label{fig:iteration-inside-M}
  \centering
\begin{tikzpicture}[scale=1.2]

  \draw[thick] (0,0.7) rectangle (4,6);
  \node at (4.3,6) {$W$};

  \draw[fill=gray!20] (2,5.5) circle (0.25);
  \node at (3,5.5) {$g_U(U)$};

  \draw[->, thick] (2,4.9) -- (2,4.1);
  \node[right] at (2.05,4.5) {$g_0$};

  \draw[fill=gray!20] (2,3.5) circle (0.25);
  \node at (3.2,3.5) {$g_0(g_U(U))$};

  \draw[->, thick] (2,2.9) -- (2,2.1);
  \node[right] at (2.05,2.5) {$g_0$};

  \node at (2,1.5) {$\vdots$};

\end{tikzpicture}
 \caption{A schematic of the scratchpad; here we draw the image of one chart $U$ in the
 atlas (which need not actually be a disk) in $W$, and the image under $g_0$. The iterates
 under $g_0$ continue to infinity.}
  
\end{figure}

{\bf Encoding countable sequences of points in $M$.}
For a schematic of this part, see Figure~\ref{fig:nested-annuli-color}.
We now choose a regular open set $P$, which together with the
scratchpad will encode a countable sequence of points in $M$. Here,
we require $P$ to satisfy the following conditions:
\begin{enumerate}
    \item The set $P$ is contained in $\bigcup_i U^i\cup\bigcup_i V^i$.
    This can be expressed by
    requiring for each component of $P$, there is an $i$ so that the $-i^{th}$ power of
    the relevant $g_0$ or $g^{\partial}_0$ is contained in $U_0$ or $V_0$, respectively.
    \item For each $i$, exactly one of the intersections $P\cap U^i$ and $P\cap V^i$ is
    nonempty and isolates 
    a unique point $p_i$ in $U^i$ or $V^i$. From here on,
    write $q_i$ for the backwards image of $p_i$ under
    the $i^{th}$ power of $g_0$ or $g^{\partial}_0$ respectively, followed by the relevant
    $g_U^{-1}$ or $g_V^{-1}$.
\end{enumerate}

Via the set $P$, we have thus encoded (with parameters),
in an unambiguous way, a countably
infinite sequence of points $\{q_i\}_{i\in\N}\subseteq M$. This is the sort
$\on{seq}(M)$.

\begin{figure}[ht]
  \centering
  \begin{tikzpicture}[scale=1]

    \draw[thick] (0,0) circle (1.5);
    \node at (0,-2) {$U^i$};

    \fill[gray!30] (-0.5,0) circle (0.45);
    \fill[blue!50]  (-0.5,0) circle (0.3);
    \fill[red!70]   (-0.5,0) circle (0.15);
    \node at (0.49,0) {$P\cap U^i$};

    \draw[thick] (4,0) circle (1.5);
    \node at (4,-2) {$U^i$};


  \end{tikzpicture}
  \caption{A schematic of two components in $U^i$. The sets $P$ meets $U^i$ and isolates
  a unique point in it.}
  \label{fig:nested-annuli-color}
\end{figure}

Since we can quantify over the arguments in the exponentiation function,
it is straightforward to see from the construction that the membership predicate
$p\in\sigma$ and $\sigma(i)=p$ are both definable, \emph{a priori} with parameters.

{\bf Eliminating parameters.}
It is clear from the descriptions of the regular open sets chosen in the covers and
the relevant homeomorphisms of $\mH$ that are chosen, that the choices are made over
definable sets of parameters. Given two choices of parameters, we simply declare
two interpretations of two sequences of points to be equivalent if for each $i\in\N$,
the $i^{th}$ terms of the sequences represent the same point of $M$; this is possible
in view of Item~\ref{c:points} of Theorem~\ref{thm:kkdlng}. This completes
the proof of the lemma.
\end{proof}

We can now give a modified proof of Lemma~\ref{lem:point-seq} for the interval. Technically
we will only interpret sequences of points in the interior $(0,1)$ of $I$, which
is all that will be needed. It is not
difficult to add ``dummy entries" of two varieties to sequences which stand for possible
choices of endpoints of $I$.

\begin{proof}[Proof of Lemma~\ref{lem:point-seq} for $M=I$]
    We begin by defining
the set of homeomorphisms of $I$ which attract to a point in the interior $(0,1)$ of $I$.
Fixing a point $p_0\in (0,1)$, we may define the set of elements $f\in\mH$ such that for
all $U$ containing $p_0$ and with closure contained in $(0,1)$, and for all $q\in (0,1)$,
there exists an $n\in\N$ such that $f^n(q)\in U_0$. Call these elements of $\mH$ the
\emph{$p_0$--attracting} homeomorphisms.
In light of Theorem~\ref{thm:kkdlng}, the $p$--attracting homeomorphisms of $I$ are
definable, with the point $p_0$ as the sole parameter.

Now, let $f\in\mH$ be a $p_0$--attracting homeomorphism for some $p_0\in (0,1)$, let
$U\subseteq (0,1)$ be a regular open set whose closure is contained in $(0,1)$, let
$U_0\subseteq U$ be a regular open set containing $p$
whose closure is contained in $U$, and let
$g\in \mH[U]$ have the property that for all distinct $i,j\in\N$, we have
$g^i(U_0)\cap g^j(U_0)=\varnothing$. Write $U_i=g^i(U_0)$ for $i\in\N$. Up to now,
we have carried out the interval analogue of initializing the scratchpad.

We now interpret countable sequences of points in $(0,1)$.
We do this by choosing a regular open set $P$ which
isolates a unique point $p_i$ in each $U_i$.
Defining $q_i=f^{-i}g^{-i}(p_i)$, we have unambiguously interpreted the sequence
$\{q_i\}_{i\in\N}$ inside of $\mH$. Moreover, every sequence of points in $(0,1)$ arises
as some such $\{q_i\}_{i\in\N}$, for various choices of $P$ and $f$.
This defines sequences of points in $(0,1)$ with parameters. We declare two sequences
$\sigma_1$ and $\sigma_2$, with different choices of parameters,
to be equivalent if for all $i\in\N$ the encoded points $\sigma_1(i)$ and $\sigma_2(i)$
represent the same point of $(0,1)$.
\end{proof}

{\bf Interpreting pre-graphs}
Armed with the interpretation of sort $\on{seq}(M)$, we can interpret the sort
of \emph{pre-graphs}; we define pre-graphs to be countable subsets $\gam\subseteq M\times M$
such that the projection of $\gam$ to each factor is dense in $M$.

\begin{lemma}\label{lem:pregraph}
    The sort of pre-graphs is uniformly interpretable for manifolds in dimension $d$,
    from the sort $\on{seq}(M)$. Moreover, the predicate
    $(x,y)\in\gam$ expressing that a pair $(x,y)\in M\times M$ is an element of $\gam$
    is parameter-free interpretable.
\end{lemma}
\begin{proof}
    We may quantify over terms of a sequence $\sigma\in\on{seq}(M)$ and thus encode a
    countable subset $\gam$ of $M\times M$ from $\sigma$ by declaring $(x,y)\in\gam$ if
    and only if there exists an $n\in\N$ such that $\sigma(2n)=x$ and $\sigma(2n+1)=y$.
    Density of the projections is expressed by saying that for each nonempty regular
    $U\in \ro(M)$, there is an odd index $i$ and an even index $j$ such that
    $\sigma(i),\sigma(j)\in U$. The set of $\gam$ encoded by this definable set of
    sequences clearly coincides with pre-graphs. We finally put an equivalence relation
    on elements of $\on{seq}(M)$
    encoding pre-graphs, which expresses that $\sigma_1$ and $\sigma_2$
    are equivalent if and only if they encode pre-graphs that are equal as subsets
    of $M\times M$;
    this is evidently a definable equivalence relation. This completes the
    parameter-free interpretation.
\end{proof}

{\bf From pre-graphs to graphs.}

We now pass to graphs of homeomorphisms of $M$.

\begin{lemma}
    Pre-graphs in dimension $d$ admit a parameter-free interpretation of $\HS_0(M)$.
\end{lemma}
\begin{proof}
    We put definable conditions on pre-graphs to guarantee that they define graphs of
homeomorphisms of $M$. Since $M$ is compact, it suffices to require that a pre-graph
$\gam$ extend continuously to the graph of a continuous self-map of $M$ which is injective
and surjective.

\begin{enumerate}
    \item
    {\bf Continuity:} we need only require for all $(x_0,y_0)\in\gam$ that for all open $V$
    containing $y_0$, there is a $U$ containing $x_0$ such that for all $(x,y)\in\gam$
    with $x\in U$, we have $y\in V$. This is clearly expressible. Any $\gam$ satisfying
    this continuity requirement automatically encodes a continuous map
    \[f_{\gam}\colon M\longrightarrow M.\]
    \item {\bf Injectivity:} we need only require that for all disjoint open $U_1$ and $U_2$
    there exist disjoint open $V_1$ and $V_2$ such that if $(x_i,y_i)\in\gam$ for
    $i\in\{1,2\}$ with $x_i\in U_i$, then $y_i\in V_i$.
    \item {\bf Surjectivity:} we need only require that the image of $f_{\gam}$ be dense
    in $M$. This can be achieved by requiring for all nonempty $V$ that there be an
    $(x,y)\in\gam$ with $y\in V$.
\end{enumerate}

Any pre-graph $\gam$ satisfying the foregoing conditions will automatically encode the
graph of a homeomorphism of $M$. Moreover,
every homeomorphism of $M$ is encoded by some pre-graph, simply by taking a dense subset
of the graph of the homeomorphism.
To complete the interpretation of $\HS_0(M)$, we put an equivalence relation on
pre-graphs which expresses that two pre-graphs $\gam_1$ and $\gam_2$
are equivalent if they encode the same
homeomorphism of $M$. For this, it suffices to require that if $(x_1,y_1)\in\gam_1$ with
$x_1\in U$ and $y_1\in V$ then there exists a pair $(x_2,y_2)\in\gam_2$ with
$x_2\in U$ and $y_2\in V$.
\end{proof}

\subsection{Interpreting $\mH$ within $\HS_0(M)$}
Recall that the initial given data is $\mH$, whereas here we have interpreted elements of $\Homeo(M)$ via their graphs; \emph{a priori}, $\Homeo(M)$ may be substantially larger
than $\mH$. We note that it is
straightforward to interpret $\mH$ as a set
within $\HS_0(M)$: indeed, consider the association $g\mapsto \Gamma_g$, which sends an
element $g\in \Homeo(M)$ to the graph of $g$ as a homeomorphism of $M$. We have
$\Gamma_g$ corresponds to a graph of an element of $\mH$ if and only
\[(\exists \gamma)[\forall x\forall y ((x,y)\in\gam_g\leftrightarrow \gamma(x)=y)].\]

Thus, we are justified in saying that $\mH$ can interpret its own elements via graphs, and we are justified in saying we have
interpreted elements of $\mH$ inside of $\HS_0(M)$. We will interpret the group operation
below. We summarize with the following corollary:

\begin{cor}\label{cor:define-graph}
    There is a definable predicate $R\subseteq \mH\times\HS_0(M)$ defining the pairs
    $(g,\gam)$ such that $\gam=\gam_g$ encodes the graph of $g$.
\end{cor}

\subsection{Interpreting the sorts $\HS_n(M)$ for $n\geq 1$}

The interpretation of the sorts $\HS_n(M)$ for $n\geq 1$ is now straightforward, because
of the existence of a computable bijection $\phi\colon \N^2\longrightarrow\N$.

\begin{lemma}\label{lem:hs1}
    For all $n\geq 1$, the sort $\HS_n(M)$ is parameter-free interpretable in $\on{seq}(M)$, uniformly interpretable for manifolds of dimension $d$.
\end{lemma}
\begin{proof}
    We proceed by induction, $\HS_0(M)$ having been interpreted already. To interpret
    $\HS_{n+1}(M)$ once $\HS_n(M)$ has been parameter-free interpreted,
    we use the bijection $\phi$ as above to definably pass from $\N$--indexed
    sequences of points to $\N^2$--indexed points
    $\{q_{(i,j)}\}_{(i,j)\in\N^2}$. For $i$ fixed, we simply require that the (obviously
    parameter-free definable) subsequence
    $\{q_{(i,j)}\}_{j\in\N}$ encode an element of $\HS_n(M)$. It is clear that this
    furnishes a parameter-free interpretation of $\HS_{n+1}(M)$.
    
    It is clear that
    the predicate $\in_n\subseteq\HS_{n}(M)\times\HS_{n+1}(M)$ is parameter-free definable,
    as is the predicate defining the $i^{th}$ term in a sequence in $\HS_n(M)$.
\end{proof}

\subsection{Predicates for manipulating $\HS(M)$}
Most predicates for manipulating sequences in $\HS_n(M)$ are easily seen to be
interpretable, as follows from the fact that one can freely quantify over the arguments
in the exponentiation function;
we have argued concerning membership $\in_n$ and the predicate $s(i)=t$ for
$s\in\HS_{n+1}(M)$ and $t\in\HS_n(M)$ already.

Let $f_1,f_2,f_3\in\HS_0(M)$ be terms in a sequence $\sigma\in\HS_1(M)$. It is easy to see that there is
a predicate expressing that $f_1*f_2=f_3$ in $\Homeo(M)$. Indeed, let $\gam_i$ be graphs
of $f_i$ for $i\in\{1,2,3\}$.
To express that $f_1*f_2=f_3$, it suffices to
express that for all $(x,z)\in\gam_3$ and all open sets $U$ and $V$
such that $x\in U$ and $z\in V$, whenever $(x',y)\in \gam_1$ with $x'\in U$
and all open $W$ such that
$y\in W$, there exists a $(y',z')\in\gam_2$ such that $y'\in W$ and $z'\in V$.

For homeomorphisms of $M$, extended supports are regular open sets which are interpretable
via Rubin's Interpretability Theorem, and which is given by
a purely first order group theoretic formula; see~\cite{Rubin1989}, and 
specifically Theorem 3.6.3 of~\cite{KK21-book} and Section 3.2 of~\cite{dlNKK22}. It is
clear then that we may interpret a new sort which represents
the extended support of an element $f\in\HS_0(M)$, and which is canonically identified with
the extended support of the homeomorphism $f$.
This completes the proof of Theorem~\ref{thm:hered-seq-homeo}.

We have the following consequences of interpreting the sort $\HS_1(M)$ and the
preceding predicates.
\begin{cor}\label{cor:subgroup-define}
    The following conclusions hold.
    \begin{enumerate}
        \item The set of sequences $s\in\HS_1(M)$ which,
        via the identification of $\HS_0(M)$ with $\Homeo(M)$,
        form subgroups of $\Homeo(M)$
        is parameter-free definable.
        \item If $X\subseteq\HS_0(M)=\Homeo(M)$ is arbitrary, then there is a
        predicate \[\on{member}_X\subseteq\Homeo(M),\] using $X$ as a parameter,
        which expresses whether an arbitrary $f\in\Homeo(M)$
        is a finite product of elements of $X$. In particular, if $X$ is parameter-free
        definable then $\on{member}_X$ is parameter-free definable.
    \end{enumerate}
\end{cor}
\begin{proof}
    The first part reduces to requiring for all $f,g\in s$, we have $f^{-1}\in s$ and
    $f\cdot g\in s$. The second part reduces to the existence of a sequence
    $s\in\HS_1(M)$ with $s(0)=1$, with $s(n)=f$ for some $n\in\N$, and such that for
    all $0<m\leq n$ we have $s(n-1)^{-1}s(n)\in X$.
\end{proof}

\section{Intermediate subgroups, mapping class groups, and Theorem~\ref{thm:gp-thy-conseq}}\label{sec:mcg}

We now use the interpretation of the sorts
$\HS(M)$ to extract group-theoretic consequences.
Observe first that $\mH$ interprets $\Homeo(M)$. Indeed, this is part of the content of Theorem~\ref{thm:hered-seq-homeo}.
Next, we can interpret $\Homeo_0(M)$. 
The key to interpreting $\Homeo_0(M)$ is the following result, which appears as Corollary 1.3 in~\cite{KE-1971}.

\begin{thm}[Edwards--Kirby]\label{thm:kirby-edwards}
Let $\mathcal U$ be an open cover of a compact manifold $M$. An arbitrary element $g\in\Homeo_0(M)$ admits a \emph{fragmentation}
subordinate to $\mathcal U$. That is, $g$ can be written as a composition of homeomorphisms that are supported in elements of $\mathcal U$.
\end{thm}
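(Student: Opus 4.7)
The plan is to deduce the fragmentation from the Chernavskii--Edwards--Kirby local contractibility theorem for the homeomorphism group, which is the main technical input of \cite{KE-1971}. Fix a metric on $M$ compatible with its topology and let $\delta>0$ be a Lebesgue number for $\mathcal U$. Given $g\in\Homeo_0(M)$, choose an isotopy $H\co M\times[0,1]\to M$ with $H_0=\Id$ and $H_1=g$. By uniform continuity of $H$, there is a subdivision $0=t_0<t_1<\cdots<t_N=1$ such that each ``increment'' $g_k:=H_{t_k}\circ H_{t_{k-1}}^{-1}$ is uniformly close to the identity, and in particular moves every point by less than $\delta/3$. Writing $g=g_N\cdots g_1$, the task reduces to showing that each such $g_k$ admits a fragmentation subordinate to $\mathcal U$.

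For the individual increments, I would invoke local contractibility: there is a neighborhood $\mathcal N$ of $\Id$ in $\Homeo(M)$ (in the uniform topology) such that every $h\in\mathcal N$ is connected to $\Id$ by a canonical isotopy $\{h_s\}$ with $h_0=\Id$, $h_1=h$, and with $\{h_s\}$ depending continuously on $h$ and supported close to the support-displacement of $h$. Shrinking the isotopy increments above if necessary so that each $g_k\in\mathcal N$, one extracts a local fragmentation from the canonical isotopy by using a partition of unity on $M$ subordinate to $\mathcal U$ to ``mask'' the isotopy: over each open set $U\in\mathcal U$ one uses the canonical isotopy on a region where the partition-of-unity value is being transferred, producing a homeomorphism of $M$ that agrees with $g_k$ on a smaller set and is the identity outside $U$. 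Iterating over the (finitely many) open sets in a refinement covering the support of $g_k$ writes $g_k$ as a finite product of homeomorphisms each supported in some $U\in\mathcal U$.

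The main obstacle, and the entire reason the cited paper is nontrivial, is the establishment of local contractibility of $\Homeo(M)$ in the uniform topology together with the quantitative control needed to guarantee that the canonical isotopy has support close to the support of its endpoint. In dimensions $1$ and $2$ this is classical and elementary, but in higher dimensions it requires the stable homeomorphism theorem and the Kirby torus trick, which is precisely what \cite{KE-1971} provides. Since the theorem as stated is simply a quotation of \cite[Cor.~1.3]{KE-1971}, I would not reprove any of this machinery; rather the ``proof'' in the present paper should be a single sentence citing the reference, with the above fragmentation-from-isotopy reduction included only if the author wishes to indicate how local contractibility is used. Given the role of the theorem here (namely, as the tool for later expressing membership in $\Homeo_0(M)$ by a first-order fragmentation predicate in $\mH$), a reference suffices.
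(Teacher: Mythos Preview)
Your assessment is correct: the paper gives no proof of this theorem at all, stating it purely as a quotation of \cite[Cor.~1.3]{KE-1971}, and you anticipated exactly that. The sketch you provide of how fragmentation follows from local contractibility is accurate and would be appropriate if the authors had chosen to include it, but they did not; the theorem is used as a black box in the proof of Proposition~\ref{prop:interpret-homeo0}.
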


\begin{prop}\label{prop:interpret-homeo0}
The group $\mH$ interprets $\Homeo_0(M)\subseteq\HS_0(M)$.
\end{prop}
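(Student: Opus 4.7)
The plan is to characterize $\Homeo_0(M)$ as exactly the set of elements of $\mH$ that admit a \emph{fragmentation} into homeomorphisms supported in small pieces. Concretely, I would define the formula $g\in\Homeo_0(M)$ to say: there exists a finite tuple $(g_1,\dots,g_k)$ of elements of $\mH$ such that each $g_i$ lies in the rigid stabilizer $\mH[U_i]$ for some regular open collared ball or regular open quasi-half-ball $U_i\subset M$, and $g=g_1g_2\cdots g_k$.

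The first step is to observe that this formula is first order expressible in $\mH$. By Theorem~\ref{thm:hered-seq-homeo} we may quantify over finite tuples of elements of $\mH$, with definable length, membership, and (crucially) iterated group multiplication, so the assertion that $g$ equals the product of the tuple can be written down. By Theorem~\ref{thm:kkdlng}, the sorts of regular open collared balls and quasi-half-balls are uniformly parameter-free interpretable, as are the rigid stabilizers $\mH[U]$ for regular open $U$; we may thus quantify over the containment $g_i\in \mH[U_i]$ for appropriate $U_i$.

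The second step is to verify that this defined subset of $\mH$ coincides with $\Homeo_0(M)$. For the inclusion $\supseteq$, I would appeal to Theorem~\ref{thm:kirby-edwards}: applying the Edwards--Kirby fragmentation theorem to the finite open cover of $M$ by regular open collared balls and quasi-half-balls furnished by Corollary~\ref{cor:single ball} produces a decomposition of any $g\in\Homeo_0(M)$ as a finite product of homeomorphisms supported in such sets. Each factor is itself in $\Homeo_0(M)\leq\mH$, so it is in the rigid stabilizer of the corresponding ball or half-ball as viewed inside $\mH$. For the inclusion $\subseteq$, any homeomorphism supported in a regular open collared ball (respectively, quasi-half-ball) is isotopic to the identity via the Alexander trick applied in a slightly larger collared ball (respectively, collared half-ball, which exists by definition of ``collared''), and a product of such homeomorphisms remains in $\Homeo_0(M)$.

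I do not anticipate a serious obstacle: all of the tools are already in hand. The only point that requires a little care is the half-ball case near $\partial M$, where one must ensure the Alexander trick isotopy is realized inside $M$ and not just in an abstract half-ball; this is handled by the collared hypothesis, which guarantees an extension of the parametrization to a neighborhood in which the trick can be performed. Once this is assembled, the group structure on the defined subset is inherited from $\mH$, and the interpretation of $\Homeo_0(M)$ in $\mH$ is complete.
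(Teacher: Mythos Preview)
Your proposal is correct and follows essentially the same approach as the paper: characterize $\Homeo_0(M)$ via fragmentation (Edwards--Kirby) into homeomorphisms supported in collared balls and half-balls, and use the Alexander trick for the converse inclusion. The only cosmetic differences are that the paper works with a single fixed cover of $n(d)$ balls and detects support via the graph interpretation $\Gamma_g$ rather than through rigid stabilizers, and it treats only the closed case explicitly; your version with a separate ball or quasi-half-ball for each factor and explicit handling of $\partial M$ is equally valid.
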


As always, the interpretation of $\Homeo_0(M)$ in $\mH$ is uniform in manifolds of bounded dimension.

\begin{proof}[Proof of Proposition~\ref{prop:interpret-homeo0}]
It suffices to construct a formula $\on{isotopy}_0(\gamma)$ that is satisfied by a homeomorphism $g$ if and only if $g$ is isotopic to the
identity. We will carry out the construction for closed manifolds, with the general case being similar.

Consider $\Gamma_g$, the graph of a homeomorphism as obtained from interpreting the sort
$\HS_1(M)$, and let $\mathfrak B=\{U_1,\ldots,U_{n(d)}\}$
be a cover of $M$, with each component of each $U_i$ having compact closure inside of a
collared open ball.

By imposing suitable definable conditions on the data defining $\Gamma_g$, we may insist that there exists an $i$ and a component $\hat U_i$ of $U_i$ such
for all $(p,q)\in\gam_g$, we have $p=q$ unless $p\in\hat U_i$. Specifically, we may write
\[\on{small-sup}(\gam):=(\forall(x,y)\in\gam)(\exists i\leq n(d))(\exists \hat u\in\pi_0
(u_i))[x\notin \hat u\rightarrow x=y];\] in this formula we are implicitly treating elements
of $\mathfrak B$ as parameters.

This condition implies that the homeomorphism $g$ encoded by $\gam$
is the identity outside of $\hat U_i$. Since
$\hat U_i$ is compactly contained in the interior of a collared ball in $M$
we have that $g$ is isotopic to the identity, as follows from the Alexander trick.

By quantifying over all such covers $\mathfrak B$ of $M$, we
thus obtain a parameter-free definable set $X\subseteq\HS_0(M)$ consisting
of graphs of elements of $\Homeo(M)$ which satisfy $\on{small-sup}$ for some such cover.

By Theorem~\ref{thm:kirby-edwards}, we have that $g\in\Homeo(M)$ is isotopic to the identity if and only if $g$ is a product of
a finite tuple of homeomorphisms lying in $X$. By Corollary~\ref{cor:subgroup-define},
it follows that $\Homeo_0(M)$ is parameter--free definable as a subset of the sort
$\HS_0(M)$.
\end{proof}

An arbitrary subgroup $\Homeo_0(M)\leq\mH'\leq\Homeo(M)$ is automatically of countable
index in $\Homeo(M)$, as follows from the fact that
for a compact manifold, $\Homeo(M)$ is separable
and therefore has countably many connected components.

\begin{proof}[Proof of Proposition~\ref{prop:unif-intermediate}]
A subgroup \[\Homeo_0(M)\leq \mH'\leq \Homeo(M)\] can be encoded by a definable equivalence class of countable subsets of 
$\Homeo(M)$; indeed, if $\underline g$ is a sequence
then we obtain a subgroup $\mH_{\underline g}$
(viewed as a subset of $\HS_0(M)$) via 
\[\mH_{\underline g}=\{h\mid (\exists g\in\underline g) [h\in g\cdot\Homeo_0(M)]\},\]
after adding the further condition that $\mH_{\underline g}$ be a group (which can be guaranteed by
imposing the first order condition that $\underline g$ be a group,
for instance). Two
sequences of homeomorphisms $\underline g$ and $\underline h$ are equivalent if $\mH_{\underline g}=\mH_{\underline h}$. Since
the mapping class group of $M$ is countable, any such subgroup $\mH'$ occurs as $\mH_{\underline g}$ for some sequence $\underline g$.
We thus obtain a canonical bijection between subgroups $\mH'$ as above and
suitable equivalence classes of sequences of homeomorphisms, as desired.

We have already shown that $\Homeo(M)$ and $\Homeo_0(M)$ are interpretable without parameters. The group $\mH$ itself is also
definable without parameters in the interpretation of $\Homeo(M)=\HS_0(M)$ in $\mH$, as is part of the content of
Theorem~\ref{thm:hered-seq-homeo}.
\end{proof}

It is not difficult to argue the conclusions of Theorem~\ref{thm:gp-thy-conseq}, and so
we only sketch the arguments. Because
$\Homeo(M)$ and $\Homeo_0(M)$ are parameter-free interpretable in $\mH$, so is $\Mod(M)$.
The sorts of countable subgroups of $\Homeo(M)$ and $\Mod(M)$
are parameter-free interpretable, by
Corollary~\ref{cor:subgroup-define}; it is immediate that one can quantify over arbitrary
subsets of countable subgroups, since these subsets will always be countable. All of
the countable algebra of groups can be formalized within $\mathrm{ACA}$ or slightly
stronger systems, which is
substantially weaker than full second order theory of countable groups to which we have
access: see~\cite{simpson-book}, page 14, and also Chapter III.
Here and for
the rest of the section, ``subgroup" will refer to a subgroup of $\Homeo(M)$ or of $\Mod(M)$.

Membership
in a fixed countable subgroup follows from Corollary~\ref{cor:subgroup-define}. Finite
generation asks whether for a countable subgroup, there exists a sequence $\sigma$
wherein all but
finitely many terms are the identity, so that every element in the subgroup can be written
as a finite product of entries in $\sigma$; this is clearly expressible: indeed, we think of
the
set $X$ in Corollary~\ref{cor:subgroup-define} as a sequence $\sigma$ where there
exists an $n\in\N$ such that for all $i>n$ the term $\sigma(i)$ is the identity. Finite
presentability is slightly more complicated but still straightforward.

Finite index subgroups of a given countable subgroup are easily defined, using the subgroup
itself as a parameter; thus, residual finiteness is expressible. For finitely
generated groups, linearity can be expressed via the
Lubotzky Linearity Criterion~\cite{Lubotzky1988}, and in general by quantifying suitably over
countable subgroups of $\mathrm{GL}_n(\bC)$; we omit the tedious details.

Isomorphism of a countable subgroup with a particular definable group is straightforward.
Amenability can be encoded with the F{\o}lner Criterion; see Chapter 2 of~\cite{gelander-cohen}.
Kazhdan's Property (T) for
finitely generated subgroups can be encoded using Ozawa's Criterion, which is the main
result of~\cite{Ozawa2016}.

\section{Descriptive set theory in $\Homeo(M)$}\label{sec:descriptive}

In this section, we show how to interpret the projective hierarchy in $\Homeo(M)$ and
characterize definability via Theorem~\ref{thm:definable}. The interpretation is modeled on the fact that
the projective hierarchy in Euclidean space is definable in second order arithmetic. 

In this section, we emphasize that all interpretations are \emph{uniform}; that is,
there is a single formula, depending only on the dimension of the manifold $M$,
which defines all open sets of $\Homeo(M)$ (as subsets of $\HS_0(M)$) for various choices
of parameters. The same holds for all sets at the various levels of the Borel hierarchy,
analytic sets, and sets in the projective hierarchy.

\subsection{Generalities on descriptive set theory}
The reader is
directed to~\cite{kechris-book,moschovakis-book} for a more thorough background.
Suppose that we are given a Polish (i.e.~completely metrizable and separable) space $X$.
We recall the definition of the Borel hierarchy.
  For every nonempty countable ordinal $\alpha$ one can define the families $\bSigma^{0}_{\alpha}(X)$ and $\bPi^{0}_{\alpha}(X)$ of subsets of $X$ as follows. 
  \begin{itemize}
	  \item The class $\bSigma_1^0(X)$ consists of all open subsets of $X$.
	  \item For all $\alpha<\aleph_{1}$ the class of $\bPi^{0}_{\alpha}$ is the collection of complements of subsets in $\bSigma^{0}_{\alpha}(X)$.
    \item For all limit ordinal $\alpha<\aleph_{1}$ we have $\Sigma^{0}_{\alpha}(X)=\bigcup_{\beta<\alpha}\Sigma^{0}_{\beta}(X)$.
    \item For all $\alpha<\aleph_{1}$ the family $\bSigma^{0}_{\alpha+1}(X)$ consists of all countable unions of sets in $\bPi_{\alpha}^{0}(X)$.
  \end{itemize}
 A subset of $X$ is called \emph{Borel} if it belongs to $\Sigma^{0}_{\alpha}$ for some $\alpha<\aleph_{1}$. 

A subset of $X$ is \emph{analytic} if it is a continuous image of Baire space $\mN=\N^{\N}$;
equivalently, a subset $A\subseteq X$ is analytic if and only if there is a closed subset of
$C\subseteq X\times \mN$ such that $A$ is the projection of $C$ to $X$.
Observe that Baire space and its topology
are parameter-free interpretable in second order arithmetic.

In the projective hierarchy, analytic sets are called
$\bSigma_1^1$. One can extend the notation of the Borel hierarchy
in order to define classes $\bSigma^{1}_{\alpha}(X)$ and $\bPi^{1}_{\alpha}(X)$ 
for all $\alpha<\aleph_{1}$;
here we will need only concern ourselves with integer values of $\alpha$, for which the definition can be given by usual induction as follows:
\begin{itemize}
	\item For all $n$ for which $\bSigma^{1}_{n}(X)$ has been defined,
    let $\bPi^{1}_{n}(X)$ be the class of complements of sets in $\bSigma^{1}_{n}$. 
	      In particular, for $n=1$, we obtain the family $\bPi^{1}_{1}(X)$ of projective sets in $X$.  
	\item A set $Z\subseteq X$ is in $\bSigma_{n+1}^1$ if there is a $\bPi_n^1$ subset
    $Y\subseteq X\times\mN$ such that $Z$ is the projection of $Y$ to $X$. 
\end{itemize}   

The sets $\{\bSigma_n^1(X)\}_{n\geq 1}$ form the \emph{projective
hierarchy}; when $X$ is a Euclidean space, the projective hierarchy
is easily seen to be definable in second order arithmetic. We note that in the definition of the projective hierarchy, the factor
$\mN$ can be replaced by an arbitrary uncountable Polish space (e.g.~$\Homeo(M)$ itself).

\newcommand{\bo}[0]{\mathrm{B}}
\newcommand{\ord}[0]{\aleph_{1}}
\newcommand{\ph}[0]{\mathrm{Pr}}

It is a standard fact that a subset $X\subseteq\R$ is definable (with parameters)
in second order arithmetic
if and only if it lies in the projective hierarchy. Theorem~\ref{thm:definable} establishes
the corresponding characterization of definable subsets of $\Homeo(M)$. We prove
one direction first.

\begin{prop}\label{prop:define-one-direction}
    Let $X\subseteq\Homeo(M)$ be definable with parameters, as a subset of the sort
    $\HS_0(M)$.
    Then $X$ lies in the projective hierarchy. If $X\subseteq\mH$ is definable with
    parameters in the language of group theory, then $X$ lies in the projective hierarchy.
\end{prop}
\begin{proof}
    This follows by induction on the quantifier complexity $\phi$ of a formula defining $X$.
    Equalities and inequalities (with parameters) in $\Homeo(M)$ define closed and open sets
    respectively, and so if $X$ is defined by quantifier-free formula then it is certainly
    Borel.

    Suppose that $X$ is defined by \[\phi(x)=(\exists y)\psi(x,y,a),\] where $\psi$ defines
    a set \[Y\subseteq(\Homeo(M))^k\] in the projective hierarchy for some $k\in\N$, and
    where
    $a$ is a tuple of parameters. Then $X$ is given
    by a projection of $Y$ to a smaller Cartesian power of copies of $\Homeo(M)$, and so
    $X$ lies at most one level higher than $Y$ in the projective hierarchy. If
    $X\subseteq\mH$ then a canonical definable identification between $X$ and a subset of
    the sort $\HS_0(M)$ is given by Corollary~\ref{cor:define-graph}.
    The proposition
    now follows easily.
\end{proof}

In the remainder of this section, we will interpret sorts for levels of the projective
hierarchy together with the predicate $\in$,
and show that every subset of $\Homeo(M)$ in the projective hierarchy is definable
with parameters (as a subset of the sort $\HS_0(M)$), and every subset of the home sort
$\mH$ in the projective hierarchy is definable with parameters.

\subsection{Open sets}\label{ss:open}
We interpret
the compact-open topology on $\Homeo(M)$ directly.
First, cover $M$ by regular open sets.
Since regular open sets themselves are encoded by
definable equivalence classes of homeomorphisms by associating their extended supports (cf.~Theorem~\ref{thm:kkdlng}),
finite covers of $M$ can be encoded by
equivalence classes of finite tuples of homeomorphisms. To define finite tuples $\tau$
of homeomorphisms whose
supports cover $M$ since one need only express that for all $p\in M$, there exists an
$f\in\tau$ such that $p\in\supp^e(f)$.

From a finite cover $\mathcal V$ of $M$,
one can define an open set $U_{\mathcal V}\subseteq\Homeo(M)$ by considering homeomorphisms $f$ such that for all $p\in M$,
there exists a $V\in\mathcal V$ such that both $p$ and $f(p)$ lie in $V$.
It is not so difficult to see that $U_{\mathcal V}$ is indeed
open.

Now, if $f\in\Homeo(M)$ and $\mathcal V$ is a finite covering of $M$ then we set
$U_{\mathcal V}(f)$ to be the set of homeomorphisms $g$ such that $g^{-1}f$ lies in $U_{\mathcal V}$ as defined above. Observe that
for a given cover $\mathcal V$ and $f\in\Homeo(M)$, the subset of $\HS_0(M)$ contained $U_{\mathcal V}(f)$ is definable (with $\mathcal V$
as a parameter).

As $\mathcal V$ varies over finite covers of $M$ and $f$ varies over $\Homeo(M)$, we have that the sets  $U_{\mathcal V}(f)$ form a basis
for the compact--open topology of $\Homeo(M)$.
Thus, a basis for the topology on $\Homeo(M)$ is encoded by certain equivalence classes of finite tuples of
elements of $\Homeo(M)$, which is to say certain definable subsets of $\HS_1(M)$, up
to definable equivalence.

More explicitly,
for a tuple $(f_1,\ldots,f_n)$, we definably associate extended
supports via
$f_i\mapsto V_i=\supp^e f_i$ for $i\geq 2$, 
while requiring that \[M\subseteq\bigcup_{i=2}
^n\on{supp}^e(f_i).\] Setting $\mathcal V=\{V_2,\ldots,V_n\}$, such a tuple of homeomorphisms encodes the set $U_{\mathcal V}(f_1)$. This interpretation is clearly
uniform in the sense described at the beginning of the section.
The predicate $\in$ is trivial to interpret.

We see now that basic open sets are
interpretable as a definable subset of $\HS_1(M)$,
up to definable equivalence by setting two tuples to be equivalent if and
only if the basic open sets they encode contain the same homeomorphisms (viewed as elements
in the sort $\HS_0(M)$).

An arbitrary open set is then interpreted as a countable sequence of basic open sets, with a homeomorphism $f$ being a member of
the open set if and only if it is a member of one of the elements in the sequence. Since basic open sets are parameter--free interpretable
in $\HS_1(M)$, we see that open sets are parameter--free interpretable in $\HS_2(M)$. Closed sets are then simply complements of open
sets. It is trivial to interpret the membership relation of homeomorphisms in an open
or closed set.

\begin{cor}\label{cor:open-definable}
    The sorts of open and closed sets of $\Homeo(M)$, viewed as subsets of $\HS_0(M)$,
    are uniformly interpretable with parameters. Open and closed subsets of $\mH$ are
    uniformly definable with parameters.
\end{cor}
\begin{proof}
    We have argued that open sets in $\Homeo(M)$ are encoded by elements in $\HS_2(M)$.
    Thus, a particular open set $U$ is identified with a parameter-free definable
    equivalence class of elements $\tau\in \HS_2(M)$, and $f\in\HS_0(M)$ if and only if
    there exists a $\sigma\in\tau$ such that $f\in\sigma$. The case of closed sets is
    identical. The definability of open and closed sets in the home sort follows now
    from Corollary~\ref{cor:define-graph}.
\end{proof}

With a minor variation on the preceding arguments, we can recover the topology on $\Homeo(M)^{\ell}\times\mN^k$ for all $
\ell,k\geq 0$.
Note that for $k\geq 1$,
we have $\Homeo(M)^{\ell}\times\mN^k\cong \Homeo(M)\times\mN$.
We record the following corollary.

\begin{cor}\label{cor:open-closed}
The sorts of open and closed subsets of $\Homeo(M)^{\ell}\times\mN^k$
are parameter--free interpretable
in $\mH$, and open and closed sets in these spaces are uniformly interpretable
with parameters.
\end{cor}

\subsection{The Borel hierarchy}

  \newcommand{\subg}[1]{\langle #1 \rangle}
  \newcommand{\seq}[0]{\N^{<\N}} 

The Borel hierarchy of $\Homeo(M)$ and $\mH$ is now straightforward to interpret. We first
indicate an interpretation of finite levels of the Borel hierarchy, followed by
the case of arbitrary
countable ordinals using Borel codes; see Section 1.4 of~\cite{gao-book}.

We have already interpreted open and closed sets in $\HS_2(M)$, which corresponds to
$\bSigma^0_1(\Homeo(M))$ and $\bPi^0_1(\Homeo(M))$, respectively; we will suppress the
notation of $\Homeo(M)$ since it will not cause confusion.

By induction,
$\bSigma^0_k$ and $\bPi^0_k$ are uniformly interpreted in $\HS_{k+1}(M)$.
By definition, elements of $\bSigma^0_{k+1}$ are countable unions of elements of
$\bPi^0_k$, which are then encoded by definable equivalence classes of elements
in $\HS_{k+2}(M)$. Elements of $\bPi^0_{k+1}$ are just given by
complementation. The proof of the following is nearly identical to that of
Corollary~\ref{cor:open-definable}

\begin{cor}\label{cor:borel-definable}
    Let $X\subseteq\Homeo(M)$ lie in a finite level of the Borel hierarchy.
    Then $X$ is uniformly interpretable with
    parameters, viewed as a subset of $\HS_0(M)$. If $X\subseteq\mH$ lies in a finite level
    of the Borel
    hierarchy then $X$ is uniformly definable with parameters in the home sort.
\end{cor}

For the general Borel hierarchy, it is helpful to use \emph{Borel codes}, which are a
standard tool in descriptive set theory. For a Polish space $X$, one chooses a countable
basis $\{U_{\tau}\}_{\tau\in\N^{<\N}}$
for the topology of $X$. A Borel set $Y\subseteq X$ is encoded
by a \emph{labeled, well-founded tree}, the definition of which we briefly recall here;
cf.~Section 1.4 of~\cite{gao-book}.
A tree $T\subseteq\N^{<\N}$ is a prefix-closed subset, where elements of $\N^{<\N}$ (also
called nodes) are
viewed as finite sequences; there is an obvious notion of length for a node.
A tree $T$ is \emph{well-founded} if there is no infinite sequence
$\{\tau_i\}_{i\in\N}$ where $\tau_{i-1}$ is a prefix of $\tau_i$. An element $\tau\in T$
is \emph{terminal} if it admits no proper extension in $T$.

If $\tau\in T$ then one
writes $T_{\tau}$ for the set of suffixes of elements of $T$ which have $\tau$ as a prefix,
so that $T_{\tau}$ is itself a tree. A well-founded tree $T\neq\varnothing$
together with a label
function $\lambda\colon T\longrightarrow\N$ forms a Borel code provided that:
\begin{enumerate}
    \item If $\tau\in T$ is non-terminal then $\lambda(\tau)\in\{0,1\}$.
    \item If $\tau\in T$ is non-terminal and $\lambda(\tau)=0$ then there exists a unique
    $\sigma\in T$ extending $\tau$ by exactly one entry, i.e.~of length
    exactly one more than $\tau$.
\end{enumerate}

If $\tau\in T$ and $\lambda$ is a labeling of $T$ then there is an obvious labeling of
$T_{\tau}$ which we also call $\lambda$.

The \emph{rank} of $\tau\in T$ is defined recursively:
\begin{enumerate}
    \item If $\tau$ is terminal then the rank of $\tau$ is zero.
    \item If $\tau\in T$ is not terminal, then the rank of $\tau$ is one more than the
    supremum of the ranks of the one-entry extensions of $\tau$ in $T$, i.e.~of length
    exactly one more than $\tau$.
    \item The rank of $T$ is the rank of the empty sequence $\varnothing\in T$.
\end{enumerate}

Choose a bijection $\N^{<\N}$ with $\N$, which we write $\tau\mapsto \langle\tau
\rangle$.
A Borel set $B_{(T,\lambda)}$ in $X$ is encoded by the pair $(T,\lambda)$ as follows.
\begin{enumerate}
    \item If $\varnothing$ is the only node of $T$ then $B_{(T,\lambda)}=U_{\tau}$, where
    $\langle\tau\rangle=\lambda(\varnothing)$.
    \item If $\varnothing$ is non-terminal and $\lambda(\varnothing)=0$ then there is
    a unique node $\sigma$ of length one extending $\varnothing$. We write
    $B_{(T,\lambda)}=X\setminus B_{(T_{\sigma},\lambda)}$.
    \item If $\varnothing$ is non-terminal and $\lambda(\varnothing)=1$, then write
    $\{\sigma_i\}_{i\in\N}$ for the nodes of length one in $T$ and define \[B_{(T,\lambda)}
    =\bigcup_i B_{(T_{\sigma_i},\lambda)}.\]
\end{enumerate}

This encoding makes sense because of the well-foundedness of $T$. A set in $X$ is Borel
if and only if it admits a Borel code. Moreover, for a countable ordinal $\alpha$,
a Borel set lies
in $\bSigma_{\alpha}^0$ if and only if it is encoded by a Borel code encoded by $(T,\lambda)$
of rank at most $\alpha$ with $\lambda(\varnothing)\neq 0$. Similarly,
a Borel set lies
in $\bPi_{\alpha}^0$ if and only if it is encoded by a Borel code encoded by $(T,\lambda)$
of rank at most $\alpha$ with $\lambda(\varnothing)= 0$.

\begin{cor}
    The following are uniformly parameter--free interpretable in $\mH$:
    \begin{enumerate}
        \item The Borel sets $\mathcal B$ of $\Homeo(M)$, viewed as subsets of $\HS_0(M)$.
        \item The membership predicate for Borel subsets.
        \item A rank predicate $\rk\subseteq \mathcal B\times \aleph_1$, consisting of
        pairs $(A,\alpha)$ with $A\in\bSigma_{\alpha}^0$.
    \end{enumerate}
\end{cor}
\begin{proof}
    This is nearly immediate. First, countable ordinals are parameter-free definable
    in second order arithmetic. Moreover, there is a definable bijection between $\N^{<\N}$
    and $\N$, so that in second order arithmetic we may define (without parameters)
    well-founded trees and hence
    Borel codes.

    It is straightforward to see that, in light of Subsection~\ref{ss:open}, we may
    have direct access to countable bases for the topology on $\Homeo(M)$. It is
    similarly straightforward to see that via Borel codes, we may encode:
    \begin{enumerate}
        \item Borel sets.
        \item A parameter-free predicate which expresses when two Borel codes encode the
        same Borel set.
        \item The rank function $\rk$.
        \item The membership predicate in members of the class of Borel sets.
    \end{enumerate}

    Moreover, individual Borel sets are interpretable with parameters. We omit the
    remaining details.
\end{proof}

\subsection{The projective hierarchy}

In this section, we complete the proof of Theorem~\ref{thm:definable}; precisely, we will
show that the levels $\bSigma_n^1$ and $\bPi_n^1$ of the projective hierarchy of
$\Homeo(M)$ are uniformly interpretable sorts, and that a set in the projective hierarchy is
definable with parameters, uniformly within a level of the hierarchy.

By definition, an analytic set in a Polish space $X$ is a continuous image of $\mN$. Equivalently, an analytic set in $X$ is the projection
of a closed subset of $X\times\mN$ to $X$. By Corollary~\ref{cor:open-closed}, we have interpreted closed subsets of $\Homeo(M)\times\mN$. More precisely, an open set in
$\Homeo(M)\times\mN$ is a countable union of basic open sets in the product, which can
be taken to be pairs of basic open sets in each factor. It is not difficult to see then
that open sets in $\Homeo(M)\times\mN$ can be encoded in $\HS_3(M)$, and closed sets
by complementation.
If \[C\subseteq\Homeo(M)\times\mN\] is a closed subset then the set \[Y_C=\{f\mid(\exists x)[(f,x)\in C]\}\] is analytic,
and every analytic set arises this way. Thus, membership of a homeomorphism
$f\in\Homeo(M)$ in an analytic (or co-analytic) set is expressible.

\begin{cor}\label{cor:analytic-definable}
    Let $X\subseteq\Homeo(M)$ be analytic or co-analytic. Then $X$ is uniformly
    interpretable with
    parameters, viewed as a subset of $\HS_0(M)$. If $X\subseteq\mH$ is analytic or
    co-analytic then $X$ is uniformly definable with parameters in the home sort.
\end{cor}

It is trivial to extend this discussion to analytic and co-analytic subsets of finite
Cartesian powers of $\Homeo(M)$.

To interpret the higher levels of the projective hierarchy, suppose
by induction that $X\subseteq
\Homeo(M)^{\ell}$ is a $\bPi_n^1$ set that is definable with parameters. Then the set
$Y\subseteq\Homeo(M)^{\ell-1}$ given by projecting $X$ to the first $\ell-1$ factors is
$\Sigma_{n+1}^1$, and every $\Sigma_{n+1}^1$ occurs this way. Thus, we have:

\begin{cor}\label{cor:projective-definable}
    Let $X\subseteq\Homeo(M)$ lie in a fixed level
    the projective hierarchy. Then $X$ is uniformly interpretable with
    parameters, viewed as a subset of $\HS_0(M)$. If $X\subseteq\mH$ lies in a fixed level
    of the
    projective hierarchy then $X$ is uniformly definable with parameters in the home sort.
\end{cor}

This completes the proof of Theorem~\ref{thm:definable}.

\section{Undefinability of sentences isolating manifolds}\label{sec:undefinable}
Throughout this section,
we will limit ourselves to full homeomorphism groups of manifolds; it is easy to see that the entire
discussion could be carried out for any subgroup between $\Homeo_0$ and $\Homeo$, and we
make this choice for the sake of concision.

In this section, we prove Theorem~\ref{thm:undefinable}, Theorem~\ref{thm:rice}, and
Theorem~\ref{thm:rice-strong}; together, these results 
show that many natural sets of natural numbers associated to homeomorphism groups of
manifolds
are not definable in 
second order arithmetic. 

We fix an arbitrary numbering of the symbols in the language of group theory, and thus obtain a computable G\"odel numbering
of strings of symbols in this language. As is standard, well-formed formulae and sentences are definable in arithmetic, which is to say the set of
G\"odel numberings of formulae and sentences are definable in first order arithmetic. For formulae and sentences $\psi$ in the language
of group theory (and occasionally, by abuse of notation, in arithmetic),
we will write $\#\psi$ for the corresponding G\"odel numbers. For a class of sentences in group theory, the definability
of the set of G\"odel numbers of sentences in that class is independent of the G\"odel numbering used.
The proof of Theorem~\ref{thm:undefinable}
will follow ultimately from Tarski's well-known undefinability of truth~\cite{BellMachover-book,hinman-book,manin-book}. That is, there is no
predicate $\on{True}$ that is definable in arithmetic (first or second order) such that for all sentences $\phi$ in second
order arithmetic, we have
\[\phi\longleftrightarrow \on{True}(\#\phi).\] See Theorem 12.7 of~\cite{jech-book} for a general discussion.

For the remainder of this section, we will fix a uniform interpretation of
second order arithmetic in homeomorphism groups of compact manifolds. If $\psi$ is
an arithmetic sentence, we will write $\tilde\psi$ for the corresponding interpreted
group theoretic statement. Thus, we have $\mathrm{Arith}_2\models\psi$ if and only if
$\Homeo(M)\models\tilde\psi$ for all compact manifolds $M$; here we use $\mathrm{Arith}_2$
to denote second order arithmetic, as opposed to $\N$ which usually denotes first order
arithmetic. For a fixed G\"odel numbering in arithmetic, the association
$\#\psi\mapsto\#\tilde\psi$ is computable.

Let $M$ be a fixed compact manifold and let $\psi$ be a sentence in group theory.
Recall that $\psi$ \emph{isolates} $M$ if for all compact manifolds $N$, we have
\[\Homeo(N)\models\psi\longleftrightarrow M\cong N.\] Notice that if $\psi$ isolates $M$ then
$\Homeo(M)\models \psi$.
Similarly, we will say $\psi$ \emph{isolates a manifold}
if there is a unique compact manifold $M$ such that $\Homeo(M)\models \psi$.

Recall that Rice's Theorem from computability theory asserts that if $\mathcal C$ is a
class of partial recursive functions then the set $\{n\mid \phi_n\in\mC\}$ is computable
if and only if $\mathcal C$ is empty or equal to the whole class of partial recursive
functions. Here, we have adopted the standard notation $\phi_n$ for the $n^{th}$ function
computed by the universal Turing machine. See Corollary 1.6.14 in~\cite{Soare-book}.

Here we will prove two analogues of
Rice's Theorem for homeomorphism groups of manifolds. Let $\mathcal M$ be a class of
(homeomorphism classes) of compact manifolds. We will say that $\mathcal M$ is \emph{finitely
axiomatized} if there is a first order sentence $\phi_{\mathcal M}$ in the language of
group theory such that for all compact manifolds $M$, we have 
\[M\in\mathcal M \Leftrightarrow \Homeo(M)\models \phi_{\mathcal M};\] in particular,
$\phi_{\mathcal M}$ isolates precisely those manifolds $M$ which lie in $\mathcal M$.

\begin{thm}\label{thm:rice}
    Let $\mathcal M$ be a class of compact manifolds that is finitely axiomatized,
    and let \[
    \on{axiom}(\mathcal M):=
    \{\#\phi\mid \phi \textrm{ finitely axiomatizes } \mathcal M\}.\] Then
    $\on{axiom}(\mathcal M)$ is not definable in second order arithmetic.
\end{thm}

The reader may note that Theorem~\ref{thm:rice} implies that even the set of sentences
which are \emph{false} for all compact manifold homeomorphism groups
(i.e.~$\on{axiom}(\varnothing)$) is so complicated
as to be undefinable in second order arithmetic.

Even more generally, let $\mathcal A$ denote the set of all homeomorphism classes of
compact manifolds, and let $\mathcal F$
denote the set of nonempty subsets of
$\mathcal A$ that are finitely axiomatized by first order sentences
in the language of group theory.

\begin{thm}\label{thm:rice-strong}
    Let $A\subseteq\mathcal F$ be nonempty and proper. 
    Then the set \[\chi(A)=
    \{\#\psi\mid \psi\textrm{ finitely axiomatizes some $a\in A$}\}\] is not definable in second
    order arithmetic.
\end{thm}

Before giving the proof of Theorem~\ref{thm:rice-strong}, we note that it implies
Theorem~\ref{thm:rice}, as well as
Theorem~\ref{thm:undefinable} from the introduction.

\begin{proof}[Proof of Theorem~\ref{thm:rice}]
    Suppose first that $A=\mathcal{M}$ is nonempty and finitely axiomatized.
    We have $A\neq\mathcal F$ because $A$ is a subset of $\mathcal A$ and because
    each of the countably infinitely many
    singletons of $\mathcal A$ is finitely axiomatized; this is part of the content
    of Theorem~\ref{thm:kkdlng}.
    By Theorem~\ref{thm:rice-strong}, we have that $\chi(A)=\on{axiom}(\mathcal M)$
    is not definable in second order arithmetic.

    To see that $\on{axiom}(\varnothing)$ is not definable in second order arithmetic,
    we simply note that for all arithmetic sentences $\psi$, we have $\#\tilde\psi\in
    \on{axiom}(\varnothing)$ if and only if $\psi$ is false in $\mathrm{Arith}_2$.
    This violates the undefinability of truth.
\end{proof}

\begin{proof}[Proof of Theorem~\ref{thm:undefinable}]
    Let $M$ be a fixed compact manifold. The undefinability of the set $\on{Sent}_M$ is
    precisely the conclusion of Theorem~\ref{thm:rice} when $\mathcal M=\{M\}$.

    For the undefinability of $\on{Sent}$, we note that if $\phi$ isolates some compact
    manifold $M$ then for all
    arithmetic sentences $\psi$, we have $\phi\wedge\tilde\psi$ isolates some compact
    manifold $M$ if and only $\mathrm{Arith}_2\models\psi$; this is simply because
    $\neg\tilde\psi$ is always false in compact manifolds homeomorphism groups, and
    $\neg\phi$ isolates no compact manifold because there are at least three pairwise
    non-homeomorphic compact manifolds. Thus, if $\on{Sent}_M$ were definable then
    we would be able to define truth in $\mathrm{Arith}_2$, a contradiction.
\end{proof}

To add to the complexity
of the
sets $\on{Sent}_M$ and $\on{Sent}$, note that
 it is well-known that there is a Diophantine equation which
does not admit a solution if and only if ZFC is consistent (or, if and only if PA is consistent); cf.~Chapter 6 of~\cite{MurtyFodden}.
For such an equation, we may express the 
nonexistence of a solution to a particular Diophantine equation as a sentence $\phi$ in first order arithmetic. Interpreting this sentence
in $\Homeo(M)$ to get a group theoretic sentence $\tilde\phi$, we see that if $\psi$ isolates $M$ then
$\psi\wedge\tilde\phi$ isolates $M$ if and only if ZFC is consistent (or, if and only if PA is consistent). 
A similar argument works for sentences isolating some manifold. Thus, for a particular
G\"odel numbering, there are numbers whose membership in $\on{Sent}_M$ and $\on{Sent}$ cannot be proved in ZFC.

We finally establish Theorem~\ref{thm:rice-strong}.

\begin{proof}[Proof of Theorem~\ref{thm:rice-strong}]
    Let $\phi\in\chi(A)$ finitely axiomatize some $a\in A$
    and let $\theta$ finitely axiomatize some $\varnothing\neq b\notin A$; the sentence
    $\theta$ exists since $A$
    is assumed to be proper. For each arithmetic
    sentence $\psi$, we let \[\psi^*:=(\tilde\psi \wedge \phi)\vee (\neg\tilde\psi\wedge
    \theta).\] Notice that $\#\psi^*\in\chi(A)$ if and only if $\mathrm{Arith}_2
    \models \psi$. Indeed, if $\psi$ is true in arithmetic then $\tilde\psi$ is
    true for all compact manifolds and $\neg\tilde\psi\wedge\theta$ is false for all compact manifolds. In this case, $\psi^*$ is true in $\Homeo(M)$ if and only
    if $\phi$ holds in $\Homeo(M)$, in which case $\#\psi^*\in\chi(A)$.

    Conversely, suppose that $\psi$ is false in arithmetic. Then $\tilde\psi \wedge \phi$
    is false for all compact manifolds, and so $\psi^*$ is true for $\Homeo(M)$ if and
    only if $\Homeo(M)\models \theta$, in which case $M\in b\notin A$. It follows that
    $\#\psi^*\notin \chi(A)$.

    Thus, if $\chi(A)$ were definable in second order arithmetic then we could define
    truth, a contradiction.
\end{proof}

Theorem~\ref{thm:rice-strong} has many other consequences regarding undefinability.
As a single example, a finite list of compact manifolds is finitely axiomatized, in view of
Theorem~\ref{thm:kkdlng}; the set of sentences axiomatizing finite collections of manifolds
is itself undefinable.

\section*{Acknowledgements}

The second author is supported by the Samsung Science and Technology Foundation under Project Number SSTF-BA1301-51 and
 by KIAS Individual Grant MG084001 at Korea Institute for Advanced Study. The first 
 author was partially supported by NSF Grant DMS-2002596,
 and is partially supported by NSF Grant
 DMS-2349814. The authors thank 
 M.~Brin, J.~Hanson, O.~Kharlampovich, and C.~Rosendal and for helpful discussions.
 The authors also thank anonymous referees for many comments which improved the paper.

  \bibliographystyle{amsplain}
  \bibliography{ref}
  
\end{document}